
\documentclass[10pt]{amsart}
\usepackage[leqno]{amsmath}
\usepackage{amssymb,latexsym,soul,cite,amsthm,color,enumitem,graphicx,tikz,mathtools,microtype,accents}
\usepackage[colorlinks=true,urlcolor=flame,citecolor=flame,linkcolor=flame,linktocpage,pdfpagelabels,bookmarksnumbered,bookmarksopen]{hyperref}
\definecolor{flame}{rgb}{0.89, 0.35, 0.13}
\usepackage[english]{babel}
\usepackage[left=2.5cm,right=2.5cm,top=2.5cm,bottom=2.5cm]{geometry}

\numberwithin{equation}{section}

\newtheorem{theorem}{Theorem}[section]
\theoremstyle{plain}
\newtheorem{lemma}[theorem]{Lemma}
\theoremstyle{plain}
\newtheorem{proposition}[theorem]{Proposition}
\theoremstyle{plain}

\theoremstyle{definition}
\newtheorem{remark}[theorem]{Remark}
\newtheorem{example}[theorem]{Example}

\newcommand{\N}{{\mathbb N}}

\newcommand{\R}{{\mathbb R}}
\newcommand{\eps}{\varepsilon}
\newcommand{\beq}{\begin{equation}}
\newcommand{\eeq}{\end{equation}}
\renewcommand{\le}{\leqslant}
\renewcommand{\ge}{\geqslant}

\newcommand{\restr}[2]{\left.#1\right|_{#2}}
\newcommand{\w}{W^{s,p}_0(\Omega)}
\newcommand{\fpl}{(-\Delta)_p^s\,}
\newcommand{\ds}{{\rm d}_\Omega^s}
\newcommand{\p}{p^*_s}
\newcommand{\cs}{C_s^0(\overline{\Omega})}

\makeatletter
\newcommand{\leqnomode}{\tagsleft@true}
\newcommand{\reqnomode}{\tagsleft@false}
\makeatother

\newenvironment{enumroman}{\begin{enumerate}

}{\end{enumerate}}

\title[Fractional $p$-Laplacian with asymmetric reactions]{Four solutions for fractional $p$-Laplacian equations with asymmetric reactions}

\author[A.\ Iannizzotto, R.\ Livrea]{Antonio Iannizzotto, Roberto Livrea}

\address[A.\ Iannizzotto]{Department of Mathematics and Computer Science
\newline\indent
University of Cagliari
\newline\indent
Via Ospedale 72, 09124 Cagliari, Italy}
\email{antonio.iannizzotto@unica.it}

\address[R.\ Livrea]{Department of Mathematics and Computer Science
\newline\indent
University of Palermo
\newline\indent
Via Archirafi 34, 90123 Palermo, Italy}
\email{roberto.livrea@unipa.it}


\begin{document}

\begin{abstract}
We consider a Dirichlet type problem for a nonlinear, nonlocal equation driven by the degenerate fractional $p$-Laplacian, whose reaction combines a sublinear term depending on a positive parameter and an asymmetric perturbation (superlinear at positive infinity, at most linear at negative infinity). By means of critical point theory and Morse theory, we prove that, for small enough values of the parameter, such problem admits at least four nontrivial solutions: two positive, one negative, and one nodal. As a tool, we prove a Brezis-Oswald type comparison result.
\vskip2pt
\noindent
{\em $2010$ Mathematics Subject Classification.} 35A15, 35R11, 58E05.
\vskip2pt
\noindent
{\em Key words and phrases.} Fractional $p$-Laplacian, asymmetric reactions, critical point theory.
\end{abstract}

\maketitle

\begin{center}
Version of \today\
\end{center}

\section{Introduction}\label{sec1}

\noindent
Nonlinear elliptic partial differential equations with asymmetric nonlinearities are usually written in the form
\[Lu = f(x,u) \ \text{in $\Omega$},\]
with several boundary conditions, where $L$ is some elliptic operator, and $f:\Omega\times\R\to\R$ is a nonlinear reaction with qualitatively different behaviors as the second variable tends to $\pm\infty$, respectively. Typically, such asymmetric behavior can be exploited to prove, via variational or topological methods, the existence of multiple solutions to the equation.
\vskip2pt
\noindent
The study of such asymmetric problems, to our knowledge, dates back to the work of Motreanu, Motreanu and Papageorgiou \cite{MMP2,MMP1}, and was then developed by several authors considering a wide range of semilinear or quasilinear equations with Dirichlet, Neumann, or even Robin boundary conditions. We recall the results of \cite{CLP,IMP,MP,PR1,RR}.
\vskip2pt
\noindent
The present paper is devoted to the study of the following Dirichlet type problem:
\beq\label{dir}
\begin{cases}
\fpl u = \lambda|u|^{q-2}u+g(x,u) & \text{in $\Omega$} \\
u = 0 & \text{in $\Omega^c$.}
\end{cases}
\eeq
Here $\Omega\subset\R^N$ ($N\ge 2)$ is a bounded domain with $C^{1,1}$ boundary $\partial\Omega$, $s\in(0,1)$, $p\ge 2$ are s.t.\ $ps<N$, and the leading operator is the degenerate fractional $p$-Laplacian, defined for all $u:\R^N\to\R$ smooth enough and $x\in\R^N$ by
\beq\label{fpl}
\fpl u(x)=2\lim_{\eps\to 0^+}\int_{B_\eps^c(x)}\frac{|u(x)-u(y)|^{p-2}(u(x)-u(y))}{|x-y|^{N+ps}}\,dy
\eeq
(which for $p=2$ reduces to the linear fractional Laplacian up to a dimensional constant $C(N,s)>0$, see \cite{DPV}). The reaction in \eqref{dir} is the sum of two terms. The first, depending on a real parameter $\lambda>0$, is a $(p-1)$-sublinear power of the unknown, i.e., $q\in(1,p)$. The second is a Carath\'eodory mapping $g:\Omega\times\R\to\R$ subject to a global subcritical growth condition and combining a $(p-1)$-linear or superlinear behavior near $0$ with an asymmetric behavior at $\pm\infty$, namely, $g(x,t)$ is $(p-1)$-superlinear at $\infty$ and at most $(p-1)$-linear at $-\infty$.
\vskip2pt
\noindent
Elliptic equations driven by linear nonlocal operators (whose prototype is the fractional Laplacian) were first studied via variational methods in \cite{SV,SV1}, while regularity theory has its ground in \cite{RS}, giving rise to a wide literature (we refer the reader to the monograph \cite{MRS}). In the quasilinear case $p\neq 2$, things are obviously more involved. The eigenvalue problem for $\fpl$ was first studied in \cite{LL}, variational methods for equations with several types of reactions were established in \cite{ILPS}, H\"older regularity of weak solutions was studied in \cite{IMS1,IMS2} (for $p>2$), maximum and comparison principles were proved in \cite{DQ,J}, equivalence between Sobolev and H\"older minimizers of the energy functional was proved in \cite{IMS3}, and a detailed study of sub- and supersolutions was performed in \cite{FI}. Existence results for the fractional $p$-Laplacian with asymmetric reactions were obtained in \cite{HY,PMZ}, while closely related problems were studied in \cite{BM,BP,BS,CMS,DQ1,XZR}. For a more detailed discussion, we refer to the surveys \cite{MS,P}.
\vskip2pt
\noindent
Our approach to problem \eqref{dir} is variational, inspired by \cite{MMP1}. We encode weak solutions as critical points of a $C^1$ energy functional $\Phi_\lambda$, defined on a convenient fractional Sobolev space and depending on $\lambda>0$. Due to the presence of the asymmetric perturbation, $\Phi_\lambda$ has no definite asymptotic behavior, so we define two truncated functionals $\Phi^\pm_\lambda$ whose critical points coincide with the positive and negative solutions of \eqref{dir}, respectively. We prove that, for all $\lambda>0$ small enough, $\Phi^+_\lambda$ has at least two nonzero critical points, one given by the mountain pass theorem and a local minimizer. Besides, for all $\lambda>0$, $\Phi^-_\lambda$ contributes at least one global minimizer. So we have three nontrivial constant sign solutions (Theorem \ref{css}).
\vskip2pt
\noindent
Pushing forward our analysis, we see that, under slightly more restrictive hypotheses, for even smaller values of $\lambda>0$, problem \eqref{dir} admits a smallest positive solution and a biggest negative solution (an idea that was first introduced in \cite{DD}). So, we truncate again the reaction introducing a new energy functional $\tilde\Phi_\lambda$, which turns out to have one more critical point of mountain pass type (in the sense of Hofer \cite{H}), taking values between the extremal constant sign solutions. Finally, by a Morse theoretic argument we show that such critical point is not $0$, hence it turns out to be a nodal (sign-changing) solution of \eqref{dir}. Thus, we conclude that \eqref{dir} admits at least four nontrivial solutions for all $\lambda>0$ small enough (Theorem \ref{nod}).
\vskip2pt
\noindent
In proving the existence of the smallest positive solution, we do not apply (as usual in such cases, see \cite{MMP1}) the strong comparison principle of \cite{J}, since it requires rather restrictive assumptions on the data $p$, $s$. Instead, we present a special comparison result for sub-supersolutions under a monotonicity condition, inspired by the classical Brezis-Oswald work \cite{BO} (see \cite{DS,LPPS,MPV} for other versions). We believe that such comparison result (stated in Theorem \ref{com} below) can be useful also in different frameworks.
\vskip2pt
\noindent
Our result represents an application of classical methods in nonlinear analysis combined with the recently established theory for the fractional $p$-Laplacian (mainly the results of \cite{DQ,FI,IMS3}). To our knowledge, this is the first multiplicity result for a fractional order problem with asymmetric reaction, even in the linear case $p=2$.
\vskip2pt
\noindent
The paper has the following structure: in Section \ref{sec2} we collect some preliminary results on fractional $p$-Laplace equations and prove a comparison result; in Section \ref{sec3} we prove the existence of two positive and a negative solutions; and in Section \ref{sec4} we prove the existence of extremal constant sign solutions and of a nodal solution.
\vskip4pt
\noindent
{\bf Notation:} For any $A\subset\R^N$ we shall set $A^c=\R^N\setminus A$. For any two measurable functions $u,v:\Omega\to\R$, $u\le v$ will mean that $u(x)\le v(x)$ for a.e.\ $x\in\Omega$ (and similar expressions). The positive (resp., negative) part of $u$ is denoted $u^+$ (resp., $u^-$). Every function $u$ defined in $\Omega$ will be identified with its $0$-extension to $\R^N$. If $X$ is an ordered Banach space, then $X_+$ will denote its non-negative order cone. The open and closed balls, respectively, centered at $u$ with radius $\rho>0$ will be denoted $B_\rho(u)$, $\overline{B}_\rho(u)$. For all $r\in[1,\infty]$, $\|\cdot\|_r$ denotes the standard norm of $L^r(\Omega)$ (or $L^r(\R^N)$, which will be clear from the context). Moreover, $C$ will denote a positive constant (whose value may change case by case).

\section{Preliminaries}\label{sec2}

\noindent
In this section, for the reader's convenience, we recall some basic results about the general Dirichlet problem for the degenerate fractional $p$-Laplacian (some also hold in the singular case $p\in(1,2)$):
\beq\label{p0}
\begin{cases}
\fpl u = f(x,u) & \text{in $\Omega$} \\
u = 0 & \text{in $\Omega^c$,}
\end{cases}
\eeq
where $\Omega$, $p$, $s$ are as in the Introduction and $f$ satisfies the following hypotheses:
\begin{itemize}[leftmargin=1cm]
\item[${\bf H}_0$] $f:\Omega\times\R\to\R$ is a Carath\'eodory function, and there exist $c_0>0$, $r\in(p,\p)$ s.t.\ for a.e.\ $x\in\Omega$ and all $t\in\R$
\[|f(x,t)| \le c_0(1+|t|^{r-1}).\]
\end{itemize}
By $\p$ we denote the critical fractional Sobolev exponent, namely, $\p=Np/(N-ps)$. Also, for all $(x,t)\in\Omega\times\R$ we set
\[F(x,t) = \int_0^t f(x,\tau)\,d\tau.\]
We provide problem \eqref{p0} with a variational structure, following \cite{FI}. For all measurable $u:\R^N\to\R$ define the Gagliardo seminorm
\[[u]_{s,p} = \Big[\iint_{\R^N\times\R^N}\frac{|u(x)-u(y)|^p}{|x-y|^{N+ps}}\,dx\,dy\Big]^\frac{1}{p}.\]
We define the fractional Sobolev spaces
\[W^{s,p}(\R^N) = \big\{u\in L^p(\R^N):\,[u]_{s,p}<\infty\big\},\]
\[\w = \big\{u\in W^{s,p}(\R^N):\,u=0 \ \text{in $\Omega^c$}\big\},\]
the latter being a uniformly convex, separable Banach space under the norm $\|u\|=[u]_{s,p}$, with dual space $W^{-s,p'}(\Omega)$ (see \cite{DPV}). The embedding $\w\hookrightarrow L^q(\Omega)$ is continuous for all $q\in[1,p^*_s]$ and compact for all $q\in[1,p^*_s)$. For any $u\in\w$ we can define $\fpl u \in W^{-s,p'}(\Omega)$ by setting for all $v\in\w$
\[\langle \fpl u, v\rangle = \iint_{\R^N\times\R^N} \frac{|u(x)-u(y)|^{p-2}(u(x)-u(y))(v(x)-v(y))}{|x-y|^{N+ps}}\,dx\,dy.\]
The definition above agrees with \eqref{fpl} when $u\in\mathcal{S}(\R^N)$. By \cite[Lemma 2.1]{FI}, $\fpl:\w\to W^{-s,p'}(\Omega)$ is a monotone, continuous, $(S)_+$-operator. Besides, the following inequality holds for all $u,v\in\w$ as an immediate consequence of H\"older's inequality:
\beq\label{dua}
\langle\fpl u,v\rangle \le \|u\|^{p-1}\|v\|.
\eeq
Since the mapping $t\mapsto t^+$ is Lipschitz, for all $u\in\w$ we have $u^\pm\in\w$, but in general
\[\|u\|^p \neq \|u^+\|^p+\|u^-\|^p,\]
unlike in the case of the classical Sobolev space $W^{1,p}_0(\Omega)$. The following lemma illustrates some simple properties of positive and negative parts, which will be used in our arguments:

\begin{lemma}\label{pn}
Let $u\in\w$, then:
\begin{enumroman}
\item\label{pn1} $\|u^\pm\|\le\|u\|$;
\item\label{pn2} $\|u^\pm\|^p\le\langle\fpl u,\pm u^\pm\rangle$.
\end{enumroman}
\end{lemma}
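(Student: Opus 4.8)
The plan is to reduce both assertions to elementary pointwise inequalities for real numbers and then integrate them against the Gagliardo kernel $|x-y|^{-N-ps}$.

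First I would dispose of \ref{pn1}. Since the map $t\mapsto t^+$ is $1$-Lipschitz (and so is $t\mapsto t^-=(-t)^+$), we have $|u^\pm(x)-u^\pm(y)|\le|u(x)-u(y)|$ for a.e.\ $x,y\in\R^N$; raising this to the power $p$, dividing by $|x-y|^{N+ps}$ and integrating over $\R^N\times\R^N$ gives $[u^\pm]_{s,p}\le[u]_{s,p}$, i.e.\ $\|u^\pm\|\le\|u\|$. (Alternatively, \ref{pn1} follows from \ref{pn2}: combining it with \eqref{dua} applied to $v=\pm u^\pm$ yields $\|u^\pm\|^p\le\|u\|^{p-1}\|u^\pm\|$, and one divides by $\|u^\pm\|^{p-1}$, the case $u^\pm=0$ being trivial.)

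The heart of the matter is \ref{pn2}, say for $u^+$. Setting $a=u(x)$, $b=u(y)$, it suffices to prove
\[|a-b|^{p-2}(a-b)\,(a^+-b^+)\ge|a^+-b^+|^p\qquad\text{for all }a,b\in\R,\]
because integrating this against $|x-y|^{-N-ps}$ over $\R^N\times\R^N$ is exactly the inequality $\langle\fpl u,u^+\rangle\ge\|u^+\|^p$. The pointwise bound is checked by examining the sign patterns of $a$ and $b$: if $a,b\ge0$ both sides equal $|a-b|^p$; if $a,b\le0$ both sides vanish; and if, say, $a\ge0>b$, then $a^+-b^+=a$ and $|a-b|=a-b\ge a\ge0$, so that $|a-b|^{p-2}(a-b)(a^+-b^+)=(a-b)^{p-1}a\ge a^{p-1}a=a^p=|a^+-b^+|^p$, using the monotonicity of $t\mapsto t^{p-1}$ on $[0,\infty)$; the case $b\ge0>a$ is symmetric.

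Finally, for the negative part I would apply the inequality just obtained to $-u$ in place of $u$: since $t\mapsto|t|^{p-2}t$ is odd we have $\fpl(-u)=-\fpl u$, while $(-u)^+=u^-$, whence $\langle\fpl u,-u^-\rangle=\langle\fpl(-u),(-u)^+\rangle\ge\|(-u)^+\|^p=\|u^-\|^p$. I do not expect any genuine obstacle here; the only slightly delicate points are the bookkeeping of signs in the mixed case of the pointwise inequality and the reduction of the $u^-$ statement to the $u^+$ one via oddness of the operator.
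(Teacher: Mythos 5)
Your proof is correct and follows essentially the same route as the paper: part \ref{pn1} via the $1$-Lipschitz character of $t\mapsto t^+$ (the paper instead splits $\R^N\times\R^N$ over the sets $\{u>0\}$ and $\{u\le 0\}$, which amounts to the same pointwise bound), and part \ref{pn2} via the pointwise inequality $|a-b|^{p-2}(a-b)(a^+-b^+)\ge|a^+-b^+|^p$ integrated against the Gagliardo kernel. The only cosmetic difference is that the paper quotes this last inequality from \cite[Lemma A.2]{BP}, whereas you verify it directly by a (correct) sign-pattern analysis, and your reduction of the $u^-$ case to the $u^+$ case via oddness of $j_p$ is a clean substitute for the paper's ``analogous argument.''
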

\begin{proof}
We only deal with $u^+$ (the argument for $u^-$ is analogous). Set
\[A_+=\big\{x\in\R^N:\,u(x)>0\big\}, \ A_-=A_+^c.\]
Then we have
\begin{align*}
\|u^+\|^p &= \iint_{\R^N\times\R^N}\frac{|u^+(x)-u^+(y)|^p}{|x-y|^{N+ps}}\,dx\,dy \\
&= \iint_{A_+\times A_+}\frac{|u(x)-u(y)|^p}{|x-y|^{N+ps}}\,dx\,dy+\iint_{A_+\times A_-}\frac{u(x)^p}{|x-y|^{N+ps}}\,dx\,dy+\iint_{A_-\times A_+}\frac{u(y)^p}{|x-y|^{N+ps}}\,dx\,dy \\
&\le \iint_{A_+\times A_+}\frac{|u(x)-u(y)|^p}{|x-y|^{N+ps}}\,dx\,dy+\iint_{A_+\times A_-}\frac{(u(x)-u(y))^p}{|x-y|^{N+ps}}\,dx\,dy+\iint_{A_-\times A_+}\frac{(u(y)-u(x))^p}{|x-y|^{N+ps}}\,dx\,dy \\
&\le \iint_{\R^N\times\R^N}\frac{|u(x)-u(y)|^p}{|x-y|^{N+ps}}\,dx\,dy \\
&= \|u\|^p,
\end{align*}
which proves \ref{pn1}. Besides, by \cite[Lemma A.2]{BP} (with $g(t)=G(t)=t^+$) we have for all $a,b\in\R$
\[|a-b|^{p-2}(a-b)(a^+-b^+) \ge |a^+-b^+|^p.\]
So we have
\begin{align*}
\langle\fpl u,u^+\rangle &= \iint_{\R^N\times\R^N}\frac{|u(x)-u(y)|^{p-2}(u(x)-u(y))(u^+(x)-u^+(y))}{|x-y|^{N+ps}}\,dx\,dy \\
&\ge \iint_{\R^N\times\R^N}\frac{|u^+(x)-u^+(y)|^p}{|x-y|^{N+ps}}\,dx\,dy = \|u^+\|^p,
\end{align*}
which proves \ref{pn2}.
\end{proof}

\noindent
A function $u\in\w$ is a (weak) solution of problem \eqref{p0} if for all $\varphi\in\w$
\[\langle\fpl u,\varphi\rangle = \int_\Omega f(x,u)\varphi\,dx.\]
Similarly, we say that $u$ is a (weak) supersolution of \eqref{p0} if for all $\varphi\in\w_+$
\[\langle\fpl u,\varphi\rangle \ge \int_\Omega f(x,u)\varphi\,dx.\]
The definition of a (weak) subsolution is analogous. For short, in such cases, we will say that $u$ satisfies weakly in $\Omega$
\[\fpl u = \ (\ge,\,\le) \ f(x,u).\]
If $u$ is a subsolution and $v$ is a supersolution s.t.\ $u\le v$ in $\Omega$, we say that $(u,v)$ is a sub-supersolution pair of \eqref{dir}, and we set
\[\mathcal{S}(u,v) = \big\{w\in\w: \ \text{$w$ is a solution of \eqref{p0}, $u\le w\le v$ in $\Omega$}\big\}.\]
The properties of the set $\mathcal{S}(u,v)$ are investigated in \cite[Lemmas 3.2 -- 3.4, Theorem 3.5]{FI} (even under a more general definition of sub- and supersolution):

\begin{proposition}\label{sss}
Let ${\bf H}_0$ hold, $(u,v)$ be a sub-supersolution pair of \eqref{p0}. Then, $\mathcal{S}(u,v)$ is a nonempty, compact set in $\w$, both upward and downward directed, in particular it has a smallest and a biggest element (with respect to the pointwise ordering of $\w$).
\end{proposition}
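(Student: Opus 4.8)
The plan is to run the classical sub--supersolution scheme, the only genuinely nonlocal point being the compatibility of lattice operations with $\fpl$.

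\emph{Nonemptiness.} First I would freeze the reaction outside $[u,v]$, setting $\tilde f(x,t)=f(x,\min\{\max\{t,u(x)\},v(x)\})$; by ${\bf H}_0$ and $u,v\in\w\hookrightarrow L^r(\Omega)$ this is a Carath\'eodory function with $|\tilde f(x,t)|\le h(x)$ for a fixed $h\in L^{r'}(\Omega)$, whose primitive obeys $|\tilde F(x,t)|\le h(x)|t|$. The truncated energy $\Psi(w)=\frac1p\|w\|^p-\int_\Omega\tilde F(x,w)\,dx$ is then $C^1$ on $\w$, coercive (because $p>1$), and sequentially weakly lower semicontinuous (the reaction term is in fact weakly continuous, by the compact embedding $\w\hookrightarrow L^r(\Omega)$); hence it has a global minimizer $w_0$, which weakly solves $\fpl w_0=\tilde f(x,w_0)$. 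To get $u\le w_0\le v$ I would test this equation with $(u-w_0)^+\in\w_+$: on $\{u>w_0\}$ one has $\tilde f(x,w_0)=f(x,u)$, so comparison with the subsolution inequality for $u$ yields $\langle\fpl u-\fpl w_0,(u-w_0)^+\rangle\le0$. But the integrand of this pairing is pointwise $\ge0$ (an elementary inequality for $t\mapsto|t|^{p-2}t$, as in \cite[Lemma A.2]{BP}), so it vanishes a.e.; since $(u-w_0)^+$ is zero on the set $\Omega^c$ of positive measure where $u=w_0=0$, the injectivity of $t\mapsto|t|^{p-2}t$ then forces $(u-w_0)^+\equiv0$. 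Symmetrically $w_0\le v$, so $w_0\in\mathcal{S}(u,v)\neq\emptyset$; note also that on $[u,v]$ the truncated problem coincides with \eqref{p0}.

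\emph{Compactness.} For $w\in\mathcal{S}(u,v)$, testing $\fpl w=f(x,w)$ with $w$ and using $|w|\le|u|+|v|\in L^r(\Omega)$ together with ${\bf H}_0$ gives a uniform bound $\|w\|^p\le C$, so $\mathcal{S}(u,v)$ is bounded in $\w$. Given $\{w_n\}\subset\mathcal{S}(u,v)$, I would pass to a subsequence with $w_n\rightharpoonup w$ in $\w$, $w_n\to w$ in $L^r(\Omega)$ and a.e.\ in $\Omega$, so that $u\le w\le v$; testing $\fpl w_n=f(x,w_n)$ with $w_n-w$ gives $\langle\fpl w_n,w_n-w\rangle\to0$, whence $w_n\to w$ in $\w$ by the $(S)_+$ property of $\fpl$, and letting $n\to\infty$ in the weak formulation (continuity of $\fpl$, dominated convergence on the reaction) shows $w\in\mathcal{S}(u,v)$. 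Hence $\mathcal{S}(u,v)$ is compact.

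\emph{Directedness and extremal elements.} The point I expect to be the main obstacle is the nonlocal counterpart of the classical fact that the maximum of two subsolutions is a subsolution, and the minimum of two supersolutions a supersolution. For $\fpl$ this rests on a delicate estimate for the defining double integral, obtained by a truncation argument: one tests the subsolution inequalities for $w_1,w_2$ against cut-offs of a given $\varphi\in\w_+$ built from a mollified Heaviside function of $w_1-w_2$, and then lets the mollification parameter vanish. Granting this, for $w_1,w_2\in\mathcal{S}(u,v)$ the function $\max\{w_1,w_2\}$ is a subsolution lying below $v$, so the nonemptiness step applied to the pair $(\max\{w_1,w_2\},v)$ produces $w_3\in\mathcal{S}(\max\{w_1,w_2\},v)\subset\mathcal{S}(u,v)$ with $w_3\ge w_1,w_2$ --- upward directedness --- while the dual argument with $\min\{w_1,w_2\}$ and the pair $(u,\min\{w_1,w_2\})$ gives downward directedness. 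Finally, for the smallest element I would fix a countable dense subset $\{z_n\}$ of the separable set $\mathcal{S}(u,v)$ and, via downward directedness, construct inductively a decreasing sequence $\hat w_n\in\mathcal{S}(u,v)$ with $\hat w_n\le\min\{z_1,\dots,z_n\}$; by the uniform bound, compactness and monotonicity, $\hat w_n\to w_*$ in $\w$ for some $w_*\in\mathcal{S}(u,v)$, and since every element of $\mathcal{S}(u,v)$ is the $\w$-limit of a subsequence of $\{z_n\}$, it follows that $w_*\le w$ for all $w\in\mathcal{S}(u,v)$. The biggest element is obtained in the same way.
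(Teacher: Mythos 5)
First, a point of reference: the paper does not prove Proposition \ref{sss} at all --- it is imported wholesale from \cite[Lemmas 3.2--3.4, Theorem 3.5]{FI}. Your reconstruction follows exactly the architecture of that reference (truncation and minimization for nonemptiness, the $(S)_+$-property for compactness, lattice operations plus separability and compactness for the extremal elements), and most of the individual steps are sound. In particular your comparison step is correct: writing $j_p(a)=|a|^{p-2}a$ and $\delta=u-w_0$, the integrand of $\langle\fpl u-\fpl w_0,\delta^+\rangle$ is $\big(j_p(u(x)-u(y))-j_p(w_0(x)-w_0(y))\big)\big(\delta^+(x)-\delta^+(y)\big)$ times the positive kernel; since $j_p$ is strictly increasing and $(u(x)-u(y))-(w_0(x)-w_0(y))=\delta(x)-\delta(y)$, this is pointwise nonnegative and vanishes only where $\delta^+(x)=\delta^+(y)$, so the vanishing of the pairing forces $\delta^+$ to be a.e.\ constant on $\R^N$, hence $\equiv 0$ because it vanishes on $\Omega^c$. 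This is a legitimate (and arguably cleaner) substitute for the quantitative inequality $\|\delta^+\|^p\le C\langle\fpl u-\fpl w_0,\delta^+\rangle$ used in \cite{FI}. The coercivity, weak lower semicontinuity, compactness via $(S)_+$, and the construction of the smallest element from a countable dense subset are all fine.

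The genuine gap is precisely the step you announce as the main obstacle and then ``grant'': that $\max\{w_1,w_2\}$ is a subsolution and $\min\{w_1,w_2\}$ a supersolution of \eqref{p0}. For the fractional $p$-Laplacian this is a real theorem (it is \cite[Lemma 3.1]{FI}, and it is the only genuinely nonlocal content of the whole proposition), and the mollified-Heaviside argument you sketch is the \emph{local} proof; it does not transfer as stated. Testing the inequality for $w_1$ with $\varphi\cdot H_\eps\circ(w_1-w_2)$ no longer localizes the operator to $\{w_1>w_2\}$: the double integral defining $\langle\fpl w_1,\cdot\rangle$ couples every $x$ with $w_1(x)>w_2(x)$ to every $y$ with $w_1(y)<w_2(y)$, producing cross terms weighted by $j_p(w_1(x)-w_1(y))$ that carry no sign and do not vanish as the mollification parameter tends to $0$; handling them requires a new pointwise inequality comparing $j_p$ evaluated at the various differences $w_i(x)-w_j(y)$, which is where the actual work lies. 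Once that lemma is available, the rest of your directedness and extremality argument (reapplying the existence step to the pairs $(\max\{w_1,w_2\},v)$ and $(u,\min\{w_1,w_2\})$, then the decreasing sequence argument) goes through; so you should either supply a proof of the lattice lemma or cite it explicitly rather than gesture at the local technique.
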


\noindent
As a special case of \cite[Theorem 3.3]{CMS}, we have the following a priori bound for solutions:

\begin{proposition}\label{apb}
Let ${\bf H}_0$ hold, $u\in\w$ be a solution of \eqref{p0}. Then, $u\in L^\infty(\Omega)$ with $\|u\|_\infty\le C$ for some $C=C(\|u\|)>0$.
\end{proposition}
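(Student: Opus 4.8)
The statement is quoted from \cite[Theorem 3.3]{CMS}; I sketch instead a direct argument by Moser (De Giorgi--Nash--Moser) iteration, which works smoothly here because the growth exponent $r$ in ${\bf H}_0$ is subcritical, $r<\p$, and the Dirichlet condition $u=0$ in $\Omega^c$ makes global (non-localized) test functions admissible, so that no nonlocal ``tail'' term has to be controlled.

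First I would prove an energy estimate. Fixing $\beta\ge1$ and $M>0$, I set $u_M=\min\{|u|,M\}$ and test \eqref{p0} with $\varphi=u\,u_M^{(\beta-1)p}$, which lies in $\w$ since it is bounded and Lipschitz as a function of $u$. A suitable choice of $g$ in the elementary inequality of \cite[Lemma A.2]{BP} --- as in the proof of Lemma \ref{pn}\ref{pn2} --- together with the fact that post-composition with a Lipschitz map does not increase the Gagliardo seminorm, gives $\|u_M^{\,\beta}\|^p\le C\beta^p\,\langle\fpl u,\varphi\rangle$. Combining this with the weak formulation of \eqref{p0}, with ${\bf H}_0$ and $u_M\le|u|$, with the continuous embedding $\w\hookrightarrow L^{\p}(\Omega)$, and then letting $M\uparrow\infty$ by monotone convergence, I would reach
\[\|u\|_{\beta\p}^{\,\beta p}\le C\beta^p\Big(1+\|u\|_{(\beta-1)p+r}^{\,(\beta-1)p+r}\Big),\]
valid whenever $u\in L^{(\beta-1)p+r}(\Omega)$ (the term $|u|^{(\beta-1)p+1}$ produced by the constant $1$ in ${\bf H}_0$ is absorbed, its exponent not exceeding $(\beta-1)p+r$).

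Then I would iterate, starting from $m_0=\p$, for which $u\in L^{m_0}(\Omega)$ with $\|u\|_{m_0}\le C\|u\|$ (continuous Sobolev embedding and $u\in\w$). Given $m_n\ge r$, I put $\beta_n:=1+(m_n-r)/p\ge1$ and $m_{n+1}:=\beta_n\p$, so that the displayed estimate with $\beta=\beta_n$ controls $\|u\|_{m_{n+1}}$ by $\|u\|_{m_n}$. A direct computation gives $m_{n+1}=\tfrac{\p}{p}m_n-\tfrac{(r-p)\p}{p}$; since $\p/p>1$ while $m_0=\p>(r-p)\p/(\p-p)$ --- which is merely a rewriting of $r<\p$ --- the sequence $(m_n)$ is increasing and $m_n\to\infty$, hence $u\in L^m(\Omega)$ for every finite $m$. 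Because $\beta_n\to\infty$ geometrically, taking logarithms in the iterated estimate shows that the resulting series and products of constants converge, which yields $\|u\|_{m_n}\le C$ uniformly in $n$, with $C$ depending only on $\|u\|$ (through $\|u\|_{m_0}\le C\|u\|$) and on $N,p,s,r,c_0$. Letting $m_n\to\infty$ gives $u\in L^\infty(\Omega)$ with $\|u\|_\infty\le C$.

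The main obstacle will be the two points hidden by the citation: the rigorous use of the test function --- it is admissible only after the truncation at level $M$, and the passage $M\uparrow\infty$ must be justified at each stage of the iteration, which is licit precisely because the inductive hypothesis $u\in L^{(\beta-1)p+r}(\Omega)$ keeps the right-hand side of the energy estimate finite --- and the explicit tracking of the dependence on $\beta$ of every constant, without which the final iteration need not converge. By contrast, the nonlocality of $\fpl$ causes no extra difficulty here, beyond the elementary inequality of \cite[Lemma A.2]{BP} already invoked.
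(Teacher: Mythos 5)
Your sketch is correct, and it is genuinely different from what the paper does: the paper offers no proof at all, deducing the statement as a special case of \cite[Theorem 3.3]{CMS} (which is designed for critical Hardy-type nonlinearities and is therefore more delicate than what is needed here). Your direct Moser iteration is the standard self-contained argument for the strictly subcritical case, and all the key points are in place: the truncated test function $\varphi=u\,u_M^{(\beta-1)p}$ is Lipschitz in $u$, hence admissible in $\w$; the inequality of \cite[Lemma A.2]{BP} with $g(t)=t\min\{|t|,M\}^{(\beta-1)p}$ and $G=\int_0^\cdot(g')^{1/p}$ gives $[G(u)]_{s,p}^p\le\langle\fpl u,g(u)\rangle$, and the pointwise bound $|G(u)|\ge\beta^{-1}u_M^{\beta}$ together with the Sobolev embedding produces exactly your Caccioppoli-type estimate with the factor $C\beta^p$ (it is worth being explicit that what the seminorm controls is $G(u)$, not $u_M^\beta$ itself, since the Gagliardo seminorm is not monotone under pointwise domination --- the detour through $\|G(u)\|_{\p}\ge\beta^{-1}\|u_M^\beta\|_{\p}$ is the correct fix); your recursion $m_{n+1}=\tfrac{\p}{p}m_n-\tfrac{(r-p)\p}{p}$ diverges precisely because $m_0=\p$ exceeds the fixed point $(r-p)\p/(\p-p)$, which is equivalent to $r<\p$; and since $u=0$ in $\Omega^c$ and the test functions are global, no tail term appears. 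The only cosmetic point left implicit is the handling of the additive constant in the recursion (replace $\|u\|_{m_n}$ by $\max\{1,\|u\|_{m_n}\}$ before taking logarithms), which is routine. What each approach buys: the citation keeps the paper short and covers more general right-hand sides, while your argument is elementary, self-contained, and makes transparent exactly where subcriticality and the Dirichlet exterior condition are used.
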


\noindent
It is well known that, though solutions of \eqref{p0} can be very regular in $\Omega$, they fail to be smooth up to the boundary, even in simple cases (see \cite[Lemma 2.2]{IMS2}). So, a major role in fractional regularity theory is played by the following weighted H\"older spaces. Set $\ds(x)= \mathrm{dist}(x, \Omega^c)^s$, define
\[\cs = \Big\{u \in C^0(\overline{\Omega}): \frac{u}{\ds} \ \text{has a continuous extension to $\overline\Omega$}\Big\},\]
and for all $\alpha \in (0,1)$
\[C_s^{\alpha}(\overline{\Omega})= \Big\{u \in C^0(\overline{\Omega}): \frac{u}{\ds} \ \text{has a $\alpha$-H\"older continuous extension to $\overline\Omega$}\Big\},\]
whose norms are defined, respectively, by
\[\|u\|_{0,s}= \Big\|\frac{u}{\ds}\Big\|_{\infty}, \ \|u\|_{\alpha,s}= \|u\|_{0,s} + \sup_{x \neq y} \frac{|u(x)/\ds(x) - u(y)/\ds(y)|}{|x-y|^{\alpha}}.\]
The embedding $C_s^{\alpha}(\overline{\Omega}) \hookrightarrow C_s^0(\overline{\Omega})$ is compact for all
$\alpha \in (0,1)$. Unlike in $\w$, the positive cone $C_s^0(\overline{\Omega})_+$ of $C_s^0(\overline{\Omega})$ has a nonempty interior given by
\[\mathrm{int}(\cs_+)= \Big\{u \in \cs:\, \inf_{x\in\Omega}\frac{u(x)}{\ds(x)} > 0\Big\}\]
(equivalent characterization as in \cite[Lemma 5.1]{ILPS}). By Proposition \ref{apb} and \cite[Theorem 1.1]{IMS2} we have the following:

\begin{proposition}\label{reg}
Let ${\bf H}_0$ hold, $u\in\w$ be a solution of \eqref{p0}. Then, $u\in C^\alpha_s(\overline\Omega)$ for some $\alpha\in(0,s]$.
\end{proposition}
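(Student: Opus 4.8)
The plan is to reduce Proposition \ref{reg} to the global H\"older regularity estimate of \cite[Theorem 1.1]{IMS2}, the only preliminary step being to produce a bounded right-hand side. First I would invoke Proposition \ref{apb}: since $u\in\w$ solves \eqref{p0}, we get $u\in L^\infty(\Omega)$ with $\|u\|_\infty\le C$ for some $C=C(\|u\|)>0$.

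Next I would set $h(x)=f(x,u(x))$ and observe that, by ${\bf H}_0$,
\[|h(x)| \le c_0\big(1+|u(x)|^{r-1}\big) \le c_0\big(1+\|u\|_\infty^{r-1}\big) \quad\text{for a.e.\ }x\in\Omega,\]
hence $h\in L^\infty(\Omega)$. Thus $u$ is a weak solution of $\fpl u=h$ in $\Omega$, with $u=0$ in $\Omega^c$ and bounded datum $h$, so the equation has been brought into the form to which the boundary regularity theory applies.

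At this point I would apply \cite[Theorem 1.1]{IMS2}, which holds precisely in the degenerate range $p\ge 2$ and under the $C^{1,1}$ regularity of $\partial\Omega$ assumed throughout: for equations $\fpl u=h$ with $h\in L^\infty(\Omega)$ it yields $u\in C^\alpha_s(\overline\Omega)$ for some $\alpha\in(0,s]$, together with an estimate $\|u\|_{\alpha,s}\le C(\|u\|_\infty,\|h\|_\infty)$ that we do not need for the qualitative statement. This gives the conclusion.

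Since the argument is essentially a citation chain, the main (mild) obstacle is just bookkeeping: one has to verify that the polynomial growth bound in ${\bf H}_0$, which a priori only controls $f$ by a power of $|t|$, does deliver a genuinely bounded datum once the $L^\infty$ estimate on $u$ from Proposition \ref{apb} is in hand, and that the structural hypotheses ($p\ge 2$, $C^{1,1}$ boundary) match exactly those required by \cite[Theorem 1.1]{IMS2}. Both points follow at once from the standing assumptions.
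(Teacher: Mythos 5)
Your argument is exactly the one the paper intends: the proposition is stated as a direct consequence of Proposition \ref{apb} (which gives $u\in L^\infty(\Omega)$, hence a bounded datum $h=f(\cdot,u)$ via ${\bf H}_0$) combined with \cite[Theorem 1.1]{IMS2}. The proposal is correct and matches the paper's citation chain, with the bookkeeping on the boundedness of the right-hand side spelled out explicitly.
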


\noindent
The strong maximum principle and Hopf's lemma for the $p$-Laplacian have an analogue in the following result, see \cite[Theorems 1.2, 1.5]{DQ}:

\begin{proposition}\label{smp}
Let ${\bf H}_0$ hold, and $\eta_0\in L^\infty(\Omega)_+$ be s.t.\ for a.e.\ $x\in\Omega$ and all $t\ge 0$
\[f(x,t) \ge -\eta_0(x)t^{p-1}.\]
Then, for all $u\in\w_+\setminus\{0\}$ solution of \eqref{p0} we have $u\in{\rm int}(\cs_+)$.
\end{proposition}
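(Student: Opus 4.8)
The statement combines a strong maximum principle with Hopf's lemma for $\fpl$, and the plan is to reduce it to the results of \cite{DQ} and then upgrade the conclusion to membership in $\mathrm{int}(\cs_+)$ using the regularity already available. First, since $u\in\w_+\setminus\{0\}$ solves \eqref{p0} and $u\ge 0$ a.e.\ in $\R^N$ (being $0$ in $\Omega^c$), the assumption on $f$ gives $f(x,u(x))\ge-\eta_0(x)u(x)^{p-1}\ge-\|\eta_0\|_\infty u(x)^{p-1}$ for a.e.\ $x\in\Omega$, hence for all $\varphi\in\w_+$
\[\langle\fpl u,\varphi\rangle=\int_\Omega f(x,u)\varphi\,dx\ge-\|\eta_0\|_\infty\int_\Omega u^{p-1}\varphi\,dx.\]
Setting $c_1=\|\eta_0\|_\infty\ge 0$, this says precisely that $u$ is a nonnegative, nontrivial weak supersolution of $\fpl v+c_1 v^{p-1}=0$ in $\Omega$; moreover $u\in L^\infty(\Omega)$ by Proposition \ref{apb}, so the lower-order term $c_1 u^{p-1}$ is bounded.

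Next I would invoke the strong minimum principle of \cite{DQ}: applied to the supersolution $u\not\equiv 0$, it yields $u>0$ in $\Omega$. Since $\partial\Omega$ is of class $C^{1,1}$, $\Omega$ satisfies the interior ball condition at every $x_0\in\partial\Omega$, so Hopf's lemma of \cite{DQ} applies at each such $x_0$ and gives $\liminf_{\Omega\ni x\to x_0}u(x)/\mathrm{dist}(x,\Omega^c)^s>0$. On the other hand, by Proposition \ref{reg} we have $u\in C^\alpha_s(\overline\Omega)\subset\cs$, so $u/\ds$ extends continuously to $\overline\Omega$; in $\Omega$ this extension equals $u/\ds$, which is strictly positive because $u>0$ and $\ds>0$ there, while at each boundary point its value coincides with the (now existing) limit of $u/\ds$, which by Hopf's lemma is strictly positive. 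Thus $u/\ds$ is a continuous, strictly positive function on the compact set $\overline\Omega$, whence $\inf_{x\in\Omega}u(x)/\ds(x)>0$, i.e.\ $u\in\mathrm{int}(\cs_+)$ by the stated characterization of the interior of the cone.

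If one is content to cite \cite{DQ}, the only delicate point is matching hypotheses: one must check that our weak supersolutions fall within (equivalently, imply the viscosity supersolution notion used in) that reference, and that a bounded potential $c_1$ is admissible there — this is exactly where the a priori bound of Proposition \ref{apb} is used. Should one instead want a self-contained argument, both ingredients can be obtained by comparison, and this is where the real work lies. Interior positivity follows by ruling out an interior zero $x_0$ of $u$: picking a ball $B\subset\Omega$ with $x_0\in\partial B$ on which $u>0$, a test function supported in a neighborhood of $B$ and vanishing near $x_0$ produces, at $x_0$, a strictly negative nonlocal contribution $-2\int_B u(y)^{p-1}|x_0-y|^{-N-ps}\,dy$, which contradicts the (viscosity) supersolution inequality $\fpl u(x_0)\ge-c_1 u(x_0)^{p-1}=0$. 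The boundary estimate follows by constructing, in an interior ball $B$ tangent to $\partial\Omega$ at $x_0$, a barrier $w\ge 0$ with $w=0$ in $B^c$, $w\asymp\mathrm{dist}(\cdot,\partial B)^s$ and $\fpl w+c_1 w^{p-1}\le 0$ in $B$, and then applying the weak comparison principle for the monotone $(S)_+$ operator $v\mapsto\fpl v+c_1 v^{p-1}$ to deduce $u\ge w$ in $B$; dividing by $\ds$ and letting $x\to x_0$ gives the claim. The main obstacle along this route is the explicit construction of such a barrier together with the verification of its differential inequality, including the bookkeeping of the nonlocal terms coming from $B^c$.
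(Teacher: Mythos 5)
Your argument is correct and coincides with the paper's approach: the paper gives no proof at all for this proposition, simply citing \cite[Theorems 1.2, 1.5]{DQ}, and your first two paragraphs are exactly the right way to assemble those two results (strong minimum principle plus Hopf's lemma) with the regularity of Proposition \ref{reg} to conclude $\inf_\Omega u/\ds>0$. The self-contained barrier argument you sketch at the end is not needed and is not attempted in the paper either.
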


\noindent
We define an energy functional for problem \eqref{p0} by setting for all $u\in\w$
\[\Phi_0(u) = \frac{\|u\|^p}{p}-\int_\Omega F(x,u)\,dx.\]
By ${\bf H}_0$, it is easily seen that $\Phi_0\in C^1(\w)$ with G\^ateaux derivative given for all $u,\varphi\in\w$ by
\[\langle\Phi_0'(u),\varphi\rangle = \langle\fpl u,\varphi\rangle-\int_\Omega f(x,u)\varphi\,dx.\]
So, $u\in\w$ is a solution of \eqref{p0} iff it is a critical point of $\Phi_0$, denoted $u\in K(\Phi_0)$. For all definitions and classical results of critical point theory, including elementary Morse theory, we refer to \cite{MMP}. Since we are going to work with truncations, we shall need the following equivalence principle for Sobolev and H\"older local minimizers of $\Phi_0$, respectively, see \cite[Theorem 1.1]{IMS3} (this is in fact a nonlocal, nonlinear version of the classical result of \cite{BN}):

\begin{proposition}\label{svh}
Let ${\bf H}_0$ hold, $u\in\w$. Then, the following are equivalent:
\begin{enumroman}
\item\label{svh1} there exists $\rho>0$ s.t.\ $\Phi_0(u+v)\ge\Phi_0(u)$ for all $v\in\w$, $\|v\|\le\rho$;
\item\label{svh2} there exists $\sigma>0$ s.t.\ $\Phi_0(u+v)\ge\Phi_0(u)$ for all $v\in\w\cap C_s^0(\overline\Omega)$, $\|v\|_{0,s}\le\sigma$.
\end{enumroman}
\end{proposition}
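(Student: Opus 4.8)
The plan is to prove the non-trivial implication \ref{svh2}$\Rightarrow$\ref{svh1}; the converse is immediate, since a perturbation that is small in $\cs$ is in particular small in $\w$ (recall $\cs\hookrightarrow\w$ continuously), so that \ref{svh1} applies directly. A preliminary observation is that \ref{svh2} forces $u\in K(\Phi_0)$: testing with $\varphi\in C^\infty_c(\Omega)\subset\cs\cap\w$ shows that $t\mapsto\Phi_0(u+t\varphi)$ has a local minimum at $t=0$, hence $\langle\Phi_0'(u),\varphi\rangle=0$, and density of $C^\infty_c(\Omega)$ in $\w$ yields $\Phi_0'(u)=0$. Thus $u$ is a solution of \eqref{p0} and, by Propositions \ref{apb} and \ref{reg}, we may henceforth assume $u\in\cs$.

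Arguing by contradiction, suppose \ref{svh1} is false. Then for every small $\eps>0$ the infimum of $\Phi_0$ on the closed ball $\overline B_\eps(u)\subset\w$ is strictly below $\Phi_0(u)$, and it is attained: $\Phi_0$ is coercive on bounded sets and, thanks to the compact embedding $\w\hookrightarrow L^r(\Omega)$, sequentially weakly lower semicontinuous there, while $\overline B_\eps(u)$ is weakly compact. Pick a minimizer $u_\eps\in\overline B_\eps(u)$: then $u_\eps\neq u$, $\Phi_0(u_\eps)<\Phi_0(u)$ and $\|u_\eps-u\|\le\eps$, so $u_\eps\to u$ in $\w$ as $\eps\to0^+$. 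By the Lagrange multiplier rule for the constraint $w\mapsto\|w-u\|^p-\eps^p\le0$ (whose derivative at $w$ is $p\,\fpl(w-u)$) there is $\mu_\eps\ge0$, with $\mu_\eps=0$ whenever $u_\eps$ is interior, such that weakly in $\Omega$
\[\fpl u_\eps+\mu_\eps\,\fpl(u_\eps-u)=f(x,u_\eps).\]
Testing this identity with $u_\eps-u$, using \eqref{dua}, $\langle\fpl u_\eps,u_\eps\rangle=\|u_\eps\|^p$, ${\bf H}_0$ and complementary slackness, one finds $\mu_\eps\,\eps^p\le C\eps$, so $\mu_\eps\,\fpl(u_\eps-u)$ stays bounded in $W^{-s,p'}(\Omega)$.

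The crux is a uniform a priori estimate $\|u_\eps\|_{\alpha,s}\le C$, with $C$ and $\alpha\in(0,s]$ independent of $\eps$. First one establishes $\|u_\eps\|_\infty\le C$ by a Moser-type iteration as in Proposition \ref{apb}; the point is that the additional term $\mu_\eps\,\fpl(u_\eps-u)$ cannot be handled as a plain source in $W^{-s,p'}(\Omega)$, but, exploiting $\mu_\eps\ge0$, the boundedness of $u$ and $\|u_\eps-u\|\to0$ together with the monotonicity of $\fpl$, its contribution against the test functions of the iteration can be absorbed. Feeding $\|u_\eps\|_\infty\le C$ into the boundary regularity theory of \cite{IMS2} (as in Proposition \ref{reg}) then upgrades this to the uniform bound in $C^\alpha_s(\overline\Omega)$. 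Once this is available, the compact embedding $C^\alpha_s(\overline\Omega)\hookrightarrow\cs$ gives, along a subsequence, $u_\eps\to w$ in $\cs$; since $u_\eps\to u$ in $\w$ forces $w=u$, we get $u_\eps\to u$ in $\cs$, contradicting \ref{svh2} because $\Phi_0(u_\eps)<\Phi_0(u)$. I expect the only genuine difficulty to be this uniform $L^\infty$ (hence $C^\alpha_s$) bound: one must keep track of the structure of the multiplier term $\mu_\eps\,\fpl(u_\eps-u)$, not merely its size.
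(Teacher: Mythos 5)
First, a structural remark: the paper does not prove this proposition at all — it is quoted verbatim from \cite[Theorem 1.1]{IMS3} — so what you are really attempting is a reconstruction of the proof in that reference. Your outline of \ref{svh2}$\Rightarrow$\ref{svh1} does follow the Brezis--Nirenberg strategy used there (minimization on small balls $\overline B_\eps(u)$, a penalized Euler--Lagrange equation with multiplier $\mu_\eps\ge 0$, uniform regularity, convergence in $\cs$, contradiction), but there are two genuine gaps. The first is in the direction you dismiss as immediate: there is \emph{no} continuous embedding of $(\cs\cap\w,\|\cdot\|_{0,s})$ into $\w$. The norm $\|v\|_{0,s}=\|v/\ds\|_\infty$ controls $\|v\|_\infty$ but gives no control on the Gagliardo seminorm (a small-amplitude, highly oscillating $v$ has small $\|v\|_{0,s}$ and arbitrarily large $\|v\|$), so \ref{svh1} cannot be ``applied directly''. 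The implication is still true but needs an argument: if $v_n\in\w\cap\cs$, $\|v_n\|_{0,s}\to 0$ and $\Phi_0(u+v_n)<\Phi_0(u)$, then $(\|v_n\|)$ is bounded (otherwise $\Phi_0(u+v_n)\to\infty$, the potential term being controlled by $\|u+v_n\|_\infty$), hence $v_n\rightharpoonup 0$ along a subsequence; weak lower semicontinuity of the norm plus convergence of the potential term force $\|u+v_n\|\to\|u\|$, so $v_n\to 0$ strongly by uniform convexity, and only then does \ref{svh1} yield a contradiction.

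The second, more serious gap is the step you yourself flag as the crux: the bound $\|u_\eps\|_{\alpha,s}\le C$ uniform in $\eps$ for solutions of $\fpl u_\eps+\mu_\eps\fpl(u_\eps-u)=f(x,u_\eps)$. Your own estimate only gives $\mu_\eps\le C\eps^{1-p}$, so the multipliers may blow up; the extra term is a second fractional $p$-Laplacian-type operator with an unbounded weight, not a source term that is merely ``bounded in $W^{-s,p'}(\Omega)$'', and it cannot be absorbed by citing Propositions \ref{apb} and \ref{reg}, which concern the unpenalized operator. Establishing $L^\infty$, interior and boundary estimates for $w\mapsto\fpl w+\mu\fpl(w-u)$ that are uniform in $\mu\ge 0$ is precisely the technical content of \cite{IMS3}; the sentence ``its contribution \dots can be absorbed'' is the statement to be proved, not a proof. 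As written, your argument records the correct roadmap of the cited reference but leaves its only nontrivial step unjustified and misstates the justification of the easy direction.
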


\noindent
Contrary to many works in this area, we are not going to use much of the spectral properties of the leading operator $\fpl$. We only recall that the principal eigenvalue $\lambda_1>0$ of $\fpl$ in $\w$ is characterized by
\beq\label{l1}
\lambda_1 = \inf_{u\in\w\setminus\{0\}}\frac{\|u\|^p}{\|u\|_p^p},
\eeq
the infimum being attained at a one-dimensional eigenspace. We denote $\hat u_1\in{\rm int}(\cs_+)$ the unique positive, $L^p$-normalized eigenfunction (see \cite{LL}). We will use the following technical lemma:

\begin{lemma}\label{wei}
Let $\xi_0\in L^\infty(\Omega)$ be s.t.\ $\xi_0\le\lambda_1$ in $\Omega$, $\xi_0\not\equiv\lambda_1$. Then, there exists $\sigma>0$ s.t.\ for all $u\in\w$
\[\|u\|^p-\int_\Omega\xi_0(x)|u|^p\,dx \ge \sigma\|u\|^p.\]
\end{lemma}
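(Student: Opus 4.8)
The plan is to argue by contradiction, using the compactness of the embedding $\w\hookrightarrow L^p(\Omega)$ together with the variational characterization \eqref{l1} of $\lambda_1$ and the strictness built into the hypothesis $\xi_0\not\equiv\lambda_1$. Set
\[
\mu = \inf_{u\in\w,\,\|u\|=1}\Big(\|u\|^p-\int_\Omega\xi_0(x)|u|^p\,dx\Big),
\]
so the claimed inequality is equivalent to $\mu>0$ (once we know $\mu\geq 0$, homogeneity of degree $p$ gives the stated bound with $\sigma=\mu$). First I would check $\mu\geq 0$: by \eqref{l1}, $\int_\Omega\xi_0|u|^p\leq\lambda_1\|u\|_p^p\leq\|u\|^p$ for every $u\in\w$, so the functional inside the infimum is nonnegative. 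It remains to rule out $\mu=0$.

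Suppose $\mu=0$. Take a minimizing sequence $(u_n)$ in $\w$ with $\|u_n\|=1$ and $\|u_n\|^p-\int_\Omega\xi_0|u_n|^p\,dx\to 0$. Since $(u_n)$ is bounded, up to a subsequence $u_n\rightharpoonup u$ in $\w$ and, by compactness of $\w\hookrightarrow L^p(\Omega)$, $u_n\to u$ strongly in $L^p(\Omega)$; hence $\int_\Omega\xi_0|u_n|^p\,dx\to\int_\Omega\xi_0|u|^p\,dx$ (using $\xi_0\in L^\infty(\Omega)$). By weak lower semicontinuity of the norm, $\|u\|^p\leq\liminf\|u_n\|^p = 1$, so
\[
\|u\|^p-\int_\Omega\xi_0(x)|u|^p\,dx \le \liminf_n\Big(\|u_n\|^p-\int_\Omega\xi_0(x)|u_n|^p\,dx\Big) = 0.
\]
Combined with $\mu\geq 0$ applied to $u$ (when $u\neq 0$), this forces $\|u\|^p-\int_\Omega\xi_0|u|^p\,dx = 0$. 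I would then split into two cases. If $u\equiv 0$: then $\int_\Omega\xi_0|u_n|^p\,dx\to 0$, so $\|u_n\|^p\to 0$, contradicting $\|u_n\|=1$. If $u\not\equiv 0$: from $\|u\|^p-\int_\Omega\xi_0|u|^p\,dx=0$ and $\int_\Omega\xi_0|u|^p\,dx\leq\lambda_1\|u\|_p^p\leq\|u\|^p$, both inequalities must be equalities. Equality in $\lambda_1\|u\|_p^p\leq\|u\|^p$ means $u$ is an eigenfunction associated to $\lambda_1$, hence (up to sign) a multiple of $\hat u_1\in\mathrm{int}(\cs_+)$, so $|u|>0$ a.e.\ in $\Omega$; equality in $\int_\Omega\xi_0|u|^p\,dx=\lambda_1\|u\|_p^p$ then reads $\int_\Omega(\lambda_1-\xi_0(x))|u|^p\,dx=0$ with $\lambda_1-\xi_0\geq 0$ and $|u|^p>0$ a.e., forcing $\xi_0\equiv\lambda_1$ in $\Omega$, against the hypothesis. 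Both cases are impossible, so $\mu>0$.

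The main obstacle, and the only place real care is needed, is the passage from $\mu=0$ to the rigidity $\xi_0\equiv\lambda_1$: one must extract a \emph{nonzero} weak limit, which is exactly why the compact embedding into $L^p$ is used to show the $L^p$-mass cannot escape (the case $u\equiv 0$), and one must know that an $L^p$-normalized $\lambda_1$-eigenfunction is a.e.\ nonzero in $\Omega$, which follows from $\hat u_1\in\mathrm{int}(\cs_+)$ together with the one-dimensionality of the eigenspace recalled after \eqref{l1}. Everything else — weak lower semicontinuity, convergence of the lower-order term under the compact embedding, and the homogeneity rescaling — is routine.
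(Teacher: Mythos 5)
Your proof is correct and follows essentially the same route as the paper: a contradiction argument with a normalized minimizing sequence, the compact embedding into $L^p(\Omega)$ to pass to the limit in the weighted term, weak lower semicontinuity of the norm, and the identification of the limit as a nonzero multiple of $\hat u_1$ whose strict positivity contradicts $\xi_0\not\equiv\lambda_1$. The only cosmetic difference is that the paper rules out $u=0$ by noting $\int_\Omega\xi_0|u|^p\,dx=\lim_n\int_\Omega\xi_0|u_n|^p\,dx=1$, whereas you treat $u\equiv 0$ as a separate case; the two are equivalent.
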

\begin{proof}
Equivalently, we prove that for all $u\in\w$, $\|u\|=1$
\[\|u\|^p-\int_\Omega\xi_0(x)|u|^p\,dx \ge \sigma.\]
Arguing by contradiction, assume that there exists a sequence $(u_n)$ in $\w$ s.t.\ $\|u_n\|=1$ for all $n\in\N$ and
\[\lim_n\Big[\|u_n\|^p-\int_\Omega\xi_0(x)|u_n|^p\,dx\Big] = 0.\]
Since $(u_n)$ is bounded, passing if necessary to a subsequence we have $u_n\rightharpoonup u$ in $\w$, $u_n\to u$ in $L^p(\Omega)$. By \eqref{l1} we have
\begin{align}\label{wei1}
0 &\le \|u\|^p-\lambda_1\|u\|_p^p \\
\nonumber&\le \|u\|^p-\int_\Omega\xi_0(x)|u|^p\,dx \\
\nonumber&\le \lim_n\Big[\|u_n\|^p-\int_\Omega\xi_0(x)|u_n|^p\,dx\Big] = 0.
\end{align}
Besides, since $u_n\to u$ in $L^p(\Omega)$ we have
\[\int_\Omega\xi_0(x)|u|^p\,dx = \lim_n\int_\Omega\xi_0(x)|u_n|^p\,dx = 1,\]
hence $u\neq 0$. So, $u$ is a principal eigenfunction. By simplicity of $\lambda_1$, there exists $\tau\neq 0$ s.t.\ $u=\tau\hat u_1$. Since $\hat u_1\in{\rm int}(\cs_+)$, we deduce $|u|>0$ in $\Omega$, so
\[\int_\Omega\xi_0(x)|u|^p\,dx < \lambda_1\|u\|_p^p,\]
against \eqref{wei1}.
\end{proof}

\noindent
We conclude this section by presenting a weak comparison result for positive sub-supersolutions of \eqref{p0}. This will play a crucial role in the proof of existence of extremal constant sign solutions (see Section \ref{sec4} below), but it also is of independent interest:

\begin{theorem}\label{com}
Let ${\bf H}_0$ hold and assume that
\[t\mapsto\frac{f(x,t)}{t^{p-1}}\]
is decreasing in $(0,\infty)$ for a.e.\ $x\in\Omega$. Let $u,v\in{\rm int}(\cs_+)$ be a subsolution and a supersolution, respectively, of \eqref{p0}. Then, $u\le v$ in $\Omega$.
\end{theorem}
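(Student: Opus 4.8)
The plan is to use the classical Brezis--Oswald device adapted to the nonlocal nonlinear setting: test the subsolution inequality for $u$ and the supersolution inequality for $v$ with suitable multiples of the functions $\frac{u^p-v^p}{u^{p-1}}$ and $\frac{v^p-u^p}{v^{p-1}}$, restricted to the set where $u>v$, and exploit the monotonicity of $t\mapsto f(x,t)/t^{p-1}$ together with a discrete Picone-type inequality for the fractional $p$-Laplacian. Since $u,v\in\mathrm{int}(\cs_+)$, both are bounded above and bounded below by a positive multiple of $\ds$, so the test functions $\varphi=\frac{u^p-v^p}{u^{p-1}}$ and $\psi=\frac{v^p-u^p}{v^{p-1}}$ (actually their positive parts on $\{u>v\}$) are well defined and, I would check, lie in $\w_+$: the quotients are Lipschitz in the relevant range and vanish where $u\le v$.

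\medskip
\noindent
\textbf{Step 1 (Picone inequality).} First I would invoke, or reprove in a couple of lines, the nonlocal discrete Picone inequality: for all $a,b>0$ and $c,d\ge 0$,
\[
|a-b|^{p-2}(a-b)\Big(\frac{c^p}{a^{p-1}}-\frac{d^p}{b^{p-1}}\Big)\le |c-d|^p,
\]
which is the fractional analogue of the classical Picone inequality and follows from convexity of $t\mapsto t^p$ (this is by now standard, see e.g.\ the Picone-type identities in the fractional literature). Summing the contribution over $\R^N\times\R^N$ against the kernel $|x-y|^{-N-ps}$, this yields
\[
\langle\fpl u,\varphi\rangle\le \langle\fpl(u\wedge v),\text{something}\rangle,
\]
but the cleaner route is to directly combine the two weak inequalities.

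\medskip
\noindent
\textbf{Step 2 (Testing and combining).} On $\Omega^+:=\{x\in\Omega:\,u(x)>v(x)\}$ set $\varphi=(u^p-v^p)/u^{p-1}\,\chi_{\Omega^+}$ and $\psi=(u^p-v^p)/v^{p-1}\,\chi_{\Omega^+}$, both in $\w_+$. Using $u$ subsolution against $\varphi$ and $v$ supersolution against $\psi$, and subtracting, I get
\[
\langle\fpl u,\varphi\rangle-\langle\fpl v,\psi\rangle\le\int_{\Omega^+}\Big(\frac{f(x,u)}{u^{p-1}}-\frac{f(x,v)}{v^{p-1}}\Big)(u^p-v^p)\,dx.
\]
On $\Omega^+$ we have $u>v>0$, so by strict monotonicity of $t\mapsto f(x,t)/t^{p-1}$ the integrand on the right is $\le 0$, in fact strictly negative wherever $u>v$. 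On the left, expanding the double integral defining $\fpl$ and grouping the kernel terms, the Picone inequality of Step~1 (applied pointwise in the $(x,y)$ variables, splitting $\R^N\times\R^N$ according to the signs of $u-v$ at $x$ and at $y$) shows that the left-hand side is $\ge 0$. Hence both sides are $0$, which forces $u-v\le 0$ a.e.\ on $\Omega^+$, i.e.\ $|\Omega^+|=0$, so $u\le v$ in $\Omega$.

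\medskip
\noindent
\textbf{Main obstacle.} The delicate point is the algebraic manipulation on the nonlocal left-hand side: unlike in the local case, $\langle\fpl u,\varphi\rangle-\langle\fpl v,\psi\rangle$ does not collapse to a single clean integral, so one must carefully symmetrize the double integrals, pair up the $(x,y)$ and $(y,x)$ contributions, and apply the pointwise Picone inequality to each symmetric pair to conclude nonnegativity of the whole — while simultaneously justifying that all integrals converge absolutely (this is where $u,v\in\mathrm{int}(\cs_+)$, hence comparable to $\ds$ near $\partial\Omega$, is essential to control the behavior of the quotients and guarantee $\varphi,\psi\in\w$). A secondary technical check is the admissibility of the test functions: one should verify $\varphi,\psi\in\w$ by an approximation/truncation argument, e.g.\ replacing $u^{p-1}$ by $(u+\eps)^{p-1}$, running the whole estimate, and letting $\eps\to0^+$ via dominated convergence.
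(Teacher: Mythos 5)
Your proposal is correct and follows essentially the same route as the paper: the same test functions $(u^p-v^p)^+/u^{p-1}$ and $(u^p-v^p)^+/v^{p-1}$ (shown to lie in $\w_+$ thanks to $u,v\in{\rm int}(\cs_+)$), the same discrete Picone inequality applied after splitting $\R^N\times\R^N$ according to membership in $\{u>v\}$, and the same contradiction between the resulting nonlocal inequality and the strict monotonicity of $t\mapsto f(x,t)/t^{p-1}$. The technical obstacles you flag (admissibility of the test functions and the case-by-case symmetrization of the double integral) are precisely the points the paper works out in detail.
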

\begin{proof}
Since $u,v\in{\rm int}(\cs_+)$, we can find $C>1$ s.t.\ in $\Omega$
\[\frac{1}{C} \le \frac{u}{\ds},\,\frac{v}{\ds} \le C,\]
hence $u/v,v/u\in L^\infty(\Omega)$. We argue by contradiction, assuming that $|\Omega_0|>0$, where
\[\Omega_0 = \big\{x\in\Omega: \ u(x)>v(x)\big\}.\]
Define $u_0,v_0\in L^p(\R^N)$, $\varphi\in L^1(\R^N)$ by setting
\[u_0=u\chi_{\Omega_0}, \ v_0=v\chi_{\Omega_0}, \ \varphi=(u^p-v^p)^+=u_0^p-v_0^p.\]
In the following lines, we will identify the functions $\varphi/u^{p-1}$, $\varphi/v^{p-1}$ with the $0$-extensions of such functions to $\R^N$. We aim at using $\varphi/u^{p-1}$, $\varphi/v^{p-1}$ as test functions in \eqref{p0}, so we need to check that these functions belong in $\w$. First we note that there exists $M>0$ s.t.\ in $\R^N$
\[0 \le \frac{\varphi}{u^{p-1}},\,\frac{\varphi}{v^{p-1}} \le Mu_0,\]
hence $\varphi/u^{p-1},\varphi/v^{p-1}\in L^p(\R^N)_+$ and both vanish in $\Omega^c$. Moreover, we claim that there exists $C>0$ s.t.\ for all $x,y\in\R^N$
\beq\label{com1}
\Big|\frac{\varphi(x)}{u^{p-1}(x)}-\frac{\varphi(y)}{u^{p-1}(y)}\Big|,\,\Big|\frac{\varphi(x)}{v^{p-1}(x)}-\frac{\varphi(y)}{v^{p-1}(y)}\Big| \le C\big(|u(x)-u(y)|+|v(x)-v(y)|\big).
\eeq
Indeed, fix $x,y\in\R^N$. By symmetry, we only consider the following cases:
\begin{itemize}[leftmargin=1cm]
\item[$(a)$] if $x,y\in\Omega_0$ and $u(x)>u(y)$, then by Lagrange's theorem we have
\begin{align*}
\Big|\frac{\varphi(x)}{u^{p-1}(x)}-\frac{\varphi(y)}{u^{p-1}(y)}\Big| &= \Big|u(x)-\frac{v^p(x)}{u^{p-1}(x)}-u(y)+\frac{v^p(y)}{u^{p-1}(y)}\Big| \\
&\le (u(x)-u(y))+\Big|\frac{v^p(x)}{u^{p-1}(x)}-\frac{v^p(y)}{u^{p-1}(x)}+\frac{v^p(y)}{u^{p-1}(x)}-\frac{v^p(y)}{u^{p-1}(y)}\Big| \\
&\le (u(x)-u(y))+\frac{|v^p(x)-v^p(y)|}{u^{p-1}(x)}+v^p(x)\frac{u^{p-1}(x)-u^{p-1}(y)}{u^{p-1}(x)u^{p-1}(y)} \\
&\le (u(x)-u(y))+p\frac{\max\{v^{p-1}(x),\,v^{p-1}(y)\}}{u^{p-1}(x)}|v(x)-v(y)| \\
&+ (p-1)v(y)\frac{\max\{u^{p-2}(x),\,u^{p-2}(y)\}}{u^{p-1}(x)}(u(x)-u(y)) \\
&\le p|u(x)-u(y)|+p|v(x)-v(y)|,
\end{align*}
while using the boundedness of $u/v$, $v/u$ we derive
\begin{align*}
\Big|\frac{\varphi(x)}{v^{p-1}(x)}-\frac{\varphi(y)}{v^{p-1}(y)}\Big| &=\Big|\frac{u^p(x)}{v^{p-1}(x)}-v(x)-\frac{u^p(y)}{v^{p-1}(y)}+v(y)\Big| \\
&\le |v(x)-v(y)|+\Big|\frac{u^p(x)}{v^{p-1}(x)}-\frac{u^p(y)}{v^{p-1}(x)}+\frac{u^p(y)}{v^{p-1}(x)}-\frac{u^p(y)}{v^{p-1}(y)}\Big| \\
&\le |v(x)-v(y)|+\frac{u^p(x)-u^p(y)}{v^{p-1}(x)}+u^p(y)\frac{|v^{p-1}(x)-v^{p-1}(y)|}{v^{p-1}(x)v^{p-1}(y)} \\
&\le |v(x)-v(y)|+p\frac{\max\{u^{p-1}(x),\,u^{p-1}(y)\}}{v^{p-1}(x)}(u(x)-u(y)) \\
&+ Cu(y)(p-1)\frac{\max\{v^{p-2}(x),\,v^{p-2}(y)\}}{v^{p-1}(x)}|v(x)-v(y)| \\
&\le |v(x)-v(y)|+C(u(x)-u(y))+C\frac{u^{p-1}(x)}{v^{p-1}(x)}|v(x)-v(y)| \\
&\le C|u(x)-u(y)|+C|v(x)-v(y)|;
\end{align*}
\item[$(b)$] if $x\in\Omega_0$, $y\notin\Omega_0$, then
\begin{align*}
\Big|\frac{\varphi(x)}{u^{p-1}(x)}-\frac{\varphi(y)}{u^{p-1}(y)}\Big| &= \frac{u^p(x)-v^p(x)}{u^{p-1}(x)} \\
&\le p\frac{\max\{u^{p-1}(x),\,v^{p-1}(x)\}}{u^{p-1}(x)}(u(x)-v(x)) \\
&= p\big[(u(x)-u(y))+(u(y)-v(y))+(v(y)-v(x))\big] \\
&\le p|u(x)-u(y)|+p|v(x)-v(y)|,
\end{align*}
and similarly
\[\Big|\frac{\varphi(x)}{v^{p-1}(x)}-\frac{\varphi(y)}{v^{p-1}(y)}\Big| \le C|u(x)-u(y)|+C|v(x)-v(y)|;\]
\item[$(c)$] if $x,y\notin\Omega_0$, finally, then
\[\varphi(x) = \varphi(y) = 0.\]
\end{itemize}
In all cases, \eqref{com1} holds. Hence, by integrating we have
\[\iint_{\R^N\times\R^N}\Big|\frac{\varphi(x)}{u^{p-1}(x)}-\frac{\varphi(y)}{u^{p-1}(y)}\Big|^p\,\frac{dx\,dy}{|x-y|^{N+ps}} \le C(\|u\|^p+\|v\|^p),\]
so $\varphi/u^{p-1}\in\w_+$. Similarly we see that $\varphi/v^{p-1}\in\w_+$. The next step consists in proving that for all $x,y\in\R^N$
\beq\label{com2}
j_p(v(x)-v(y))\Big[\frac{\varphi(x)}{v^{p-1}(x)}-\frac{\varphi(y)}{v^{p-1}(y)}\Big] \le j_p(u(x)-u(y))\Big[\frac{\varphi(x)}{u^{p-1}(x)}-\frac{\varphi(y)}{u^{p-1}(y)}\Big],
\eeq
where we have set $j_p(a)=|a|^{p-2}a$ for all $a\in\R$. First, we rephrase \eqref{com2} as
\[A+B \le C+D,\]
where
\[A = j_p(v(x)-v(y))\Big[\frac{u_0^p(x)}{v^{p-1}(x)}-\frac{u_0^p(y)}{v^{p-1}(y)}\Big], \ B = j_p(u(x)-u(y))\Big[\frac{v_0^p(x)}{u^{p-1}(x)}-\frac{v_0^p(y)}{u^{p-1}(y)}\Big],\]
\[C = j_p(v(x)-v(y))(v_0(x)-v_0(y)), \ D = j_p(u(x)-u(y))(u_0(x)-u_0(y)).\]
As above, we consider three cases:
\begin{itemize}[leftmargin=1cm]
\item[$(a)$] if $x,y\in\Omega_0$, then we apply a discrete Picone's inequality:
\[j_p(a-b)\Big[\frac{c^p}{a^{p-1}}-\frac{d}{b^{p-1}}\Big] \le |c-d|^p\]
for all $a,b>0$, $c,d\ge 0$ (see \cite[Proposition 2.2]{BS}), to get
\begin{align*}
A &= j_p(v(x)-v(y))\Big[\frac{u^p(x)}{v^{p-1}(x)}-\frac{u^p(y)}{v^{p-1}(y)}\Big] \\
&\le |u(x)-u(y)|^p = D,
\end{align*}
and similarly $B\le C$;
\item[$(b)$] if $x\in\Omega_0$, $y\notin\Omega_0$, then $v(y)/v(x) \ge u(y)/u(x)$, hence
\begin{align*}
A-C &= j_p(v(x)-v(y))\frac{u^p(x)-v^p(x)}{v^{p-1}(x)} \\
&= j_p\Big(1-\frac{v(y)}{v(x)}\Big)(u^p(x)-v^p(x)) \\
&\le j_p\Big(1-\frac{u(y)}{u(x)}\Big)(u^p(x)-v^p(x)) \\
&= j_p(u(x)-u(y))\frac{u^p(x)-v^p(x)}{u^{p-1}(x)} = D-B;
\end{align*}
\item[$(c)$] if $x,y\notin\Omega_0$, then
\[A = B = C = D = 0.\]
\end{itemize}
Integrating \eqref{com2}, we immediately get
\beq\label{com3}
\Big\langle\fpl v,\frac{\varphi}{v^{p-1}}\Big\rangle \le \Big\langle\fpl u,\frac{\varphi}{u^{p-1}}\Big\rangle.
\eeq
Now recall that $u$ and $v$ are a sub- and a supersolution, respectively, of \eqref{p0}, so testing with $\varphi/u^{p-1},\varphi/v^{p-1}\in\w_+$ and applying the monotonicity assumption we have
\begin{align*}
\Big\langle\fpl u,\frac{\varphi}{u^{p-1}}\Big\rangle &\le \int_\Omega f(x,u)\frac{\varphi}{u^{p-1}}\,dx \\
&= \int_{\Omega_0}\frac{f(x,u)}{u^{p-1}}(u^p-v^p)\,dx \\
&< \int_{\Omega_0}\frac{f(x,v)}{v^{p-1}}(u^p-v^p)\,dx \\
&= \int_\Omega f(x,v)\frac{\varphi}{v^{p-1}}\,dx \\
&\le \Big\langle\fpl v,\frac{\varphi}{v^{p-1}}\Big\rangle,
\end{align*}
against \eqref{com3}. Thus $u\le v$ in $\Omega$.
\end{proof}

\begin{remark}\label{rbo}
Theorem \ref{com} is a partial analogue for the fractional $p$-Laplacian of the classical results of \cite{BO,DS}. Similar results in the fractional setting were obtained in \cite{LPPS} for $p=2$, in \cite{BS} for any $p>1$ and a pure power reaction, and in \cite{MPV} for Robin boundary condition. In our case, we make a close connection to the regularity result of \cite{IMS2} in assuming that both $u,v\in{\rm int}(\cs_+)$, which allows for a simpler proof. We note, {\em en passant}, that by applying Theorem \ref{com} twice one can easily prove that, under the same monotonicity assumption, problem \eqref{p0} has at most one solution.
\end{remark}

\section{Constant sign solutions}\label{sec3}

\noindent
This section is devoted to the existence of positive and negative solutions of \eqref{dir}. Here we assume the following hypotheses on the perturbation $g$:

\begin{itemize}[leftmargin=1cm]
\item[${\bf H}_1$] $g:\Omega\times\R\to\R$ is a Carath\'eodory function, we set $\displaystyle G(x,t) = \int_0^t g(x,\tau)\,d\tau$ for all $(x,t)\in\R$, and
\begin{enumroman}
\item\label{h11} there exist $c_1>0$, $r\in(p,\p)$ s.t.\ for a.e.\ $x\in\R$ and all $t\in\R$
\[|g(x,t)|\le c_1(1+|t|^{r-1});\]
\item\label{h12} uniformly for a.e.\ $x\in\Omega$
\[\lim_{t\to\infty}\frac{G(x,t)}{t^p}=\infty;\]
\item\label{h13} there exist $c_2,\beta>0$, with $\displaystyle\max\Big\{q,\,\frac{N(r-p)}{ps}\Big\}<\beta<\p$ s.t.\ uniformly for a.e.\ $x\in\Omega$
\[\liminf_{t\to\infty}\frac{g(x,t)t-pG(x,t)}{t^\beta}\ge c_2;\]
\item\label{h14} there exist $\eta_1,\eta_2\in L^\infty(\Omega)_+$ s.t.\ $\eta_2\le\lambda_1$ in $\Omega$, $\eta_2\not\equiv\lambda_1$, and uniformly for a.e.\ $x\in\Omega$
\[-\eta_1(x)\le\liminf_{t\to 0}\frac{g(x,t)}{|t|^{p-2}t}\le\limsup_{t\to 0}\frac{g(x,t)}{|t|^{p-2}t}\le\eta_2(x);\]
\item\label{h15} there exists $\theta\in L^\infty(\Omega)_+$ s.t.\ $\theta\le\lambda_1$ in $\Omega$, $\theta\not\equiv\lambda_1$, and uniformly for a.e.\ $x\in\Omega$
\[\limsup_{t\to -\infty}\frac{G(x,t)}{|t|^p}\le\frac{\theta(x)}{p}.\]
\end{enumroman}
\end{itemize}
Hypothesis ${\bf H}_1$ \ref{h11} is a subcritical growth condition, useful in obtaining compactness properties for the energy functional. Hypothesis \ref{h12} forces for $g(x,\cdot)$ a $(p-1)$-superlinear growth at $\infty$, tempered by an asymptotic condition of Ambrosetti-Rabinowitz type \ref{h13} (this was first introduced in \cite{CM} for the Laplacian). By \ref{h14}, $g(x,\cdot)$ is $(p-1)$-linear at zero and by \ref{h15} it is at most $(p-1)$-linear at $-\infty$, thus exhibiting an asymmetric behavior. For simplicity, we assume in both cases that possible $(p-1)$-linear behaviors have no resonance with the principal eigenvalue in all of $\Omega$.

\begin{example}\label{eh1}
The following autonomous mapping $g\in C(\R)$ clearly satisfies ${\bf H}_1$:
\[g(t) = a|t|^{p-2}t+(t^+)^{r-1},\]
with $a\in(0,\lambda_1)$, $r\in(p,\p)$ (set $\beta=r$ in \ref{h13}).
\end{example}

\noindent
Fix $\lambda>0$ and set for all $(x,t)\in\Omega\times\R$
\[f_\lambda(x,t) = \lambda|t|^{q-2}t+g(x,t), \ F_\lambda(x,t) = \int_0^t f_\lambda(x,\tau)\,d\tau.\]
Clearly, by ${\bf H}_1$ we see that $f_\lambda$ satisfies ${\bf H}_0$. So, we can define an energy functional $\Phi_\lambda\in C^1(\w)$ for problem \eqref{dir} by setting for all $u\in\w$
\[\Phi_\lambda(u) = \frac{\|u\|^p}{p}-\int_\Omega F_\lambda(x,u)\,dx.\]
By ${\bf H}_1$ \ref{h14}, we easily see that $f_\lambda(\cdot,0)=0$ in $\Omega$, so $0\in K(\Phi_\lambda)$ for all $\lambda>0$, i.e., \eqref{dir} always admits the trivial solution.
\vskip2pt
\noindent
In order to detect constant sign solutions, we define two truncated energy functionals. Set for all $(x,t)\in\Omega\times\R$
\[f^\pm_\lambda(x,t) = f_\lambda(x,\pm t^\pm), \ F^\pm_\lambda(x,t) = \int_0^t f_\lambda^\pm(x,\tau)\,d\tau,\]
and for all $u\in\w$
\[\Phi^\pm_\lambda(u) = \frac{\|u\|^p}{p}-\int_\Omega F^\pm_\lambda(x,u)\,dx.\]
We first focus on positive solutions, starting with a crucial compactness property, see \cite[Definition 5.14 (b)]{MMP1}:

\begin{lemma}\label{cer}
Let ${\bf H}_1$ hold. Then, $\Phi^+_\lambda\in C^1(\w)$ satisfies the Cerami $(C)$-condition.
\end{lemma}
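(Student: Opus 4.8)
The plan is to verify the Cerami $(C)$-condition for $\Phi^+_\lambda$ in the standard two-stage fashion: first show that any Cerami sequence $(u_n)$ is bounded in $\w$, then extract a strongly convergent subsequence using the $(S)_+$-property of $\fpl$. Recall a Cerami sequence satisfies $|\Phi^+_\lambda(u_n)|\le C$ and $(1+\|u_n\|)\|(\Phi^+_\lambda)'(u_n)\|_{W^{-s,p'}(\Omega)}\to 0$. Testing $(\Phi^+_\lambda)'(u_n)$ with $-u_n^-\in\w_+$ and using Lemma~\ref{pn}~\ref{pn2} together with the fact that $f^+_\lambda(x,t)=0$ for $t\le 0$, one gets $\|u_n^-\|^p\le\langle\fpl u_n,-u_n^-\rangle=o(1)$, so $u_n^-\to 0$ in $\w$; hence it suffices to bound $v_n:=u_n^+$.

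For the positive part, the argument is the classical Ambrosetti--Rabinowitz-type contradiction adapted to the fractional Sobolev setting. Suppose $\|v_n\|\to\infty$ and set $w_n=v_n/\|v_n\|$, so $\|w_n\|=1$; passing to a subsequence, $w_n\rightharpoonup w$ in $\w$ and $w_n\to w$ in $L^r(\Omega)$ and a.e. Combining the energy bound $p\Phi^+_\lambda(u_n)=\|u_n\|^p-p\int_\Omega F^+_\lambda(x,u_n)\,dx$ with the derivative estimate $\langle(\Phi^+_\lambda)'(u_n),u_n\rangle=\|u_n\|^p-\int_\Omega f^+_\lambda(x,u_n)u_n\,dx=o(1)$ gives
\[
\int_\Omega\big(f^+_\lambda(x,u_n)u_n-pF^+_\lambda(x,u_n)\big)\,dx \le C.
\]
Using ${\bf H}_1$~\ref{h13} (which controls $g(x,t)t-pG(x,t)$ from below by $c_2 t^\beta$ for large $t$) and the fact that the sublinear $q$-term contributes $(q-p)\frac{\lambda}{q}|t|^q$, bounded below, one deduces that $\int_\Omega (v_n)^\beta\,dx$ stays bounded, i.e. $(v_n)$ is bounded in $L^\beta(\Omega)$. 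Then, since $\beta>N(r-p)/(ps)$, a standard interpolation between $L^\beta(\Omega)$ and $L^{\p}(\Omega)$ shows $\int_\Omega |f^+_\lambda(x,u_n)|\,|v_n|\,dx = o(\|v_n\|^p)$: the growth ${\bf H}_1$~\ref{h11} gives $|f^+_\lambda(x,t)|\,|t|\le C(1+|t|^r)$, and the exponent condition is precisely what makes $\|v_n\|_r^r/\|v_n\|^p\to 0$ after normalizing. Dividing $\langle(\Phi^+_\lambda)'(u_n),v_n\rangle=o(1)$ by $\|v_n\|^p$ then yields $1=\|w_n\|^p\to 0$, a contradiction; hence $(u_n)$ is bounded.

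Once boundedness is established, pass to a subsequence with $u_n\rightharpoonup u$ in $\w$ and $u_n\to u$ in $L^r(\Omega)$. Testing $(\Phi^+_\lambda)'(u_n)\to 0$ with $u_n-u$ gives $\langle\fpl u_n,u_n-u\rangle=\int_\Omega f^+_\lambda(x,u_n)(u_n-u)\,dx+o(1)$, and the right-hand side tends to $0$ by the growth condition ${\bf H}_1$~\ref{h11}, Hölder's inequality, and strong $L^r$-convergence. Thus $\limsup_n\langle\fpl u_n,u_n-u\rangle\le 0$, and since $\fpl$ is an $(S)_+$-operator (by \cite[Lemma 2.1]{FI}), $u_n\to u$ strongly in $\w$, proving the $(C)$-condition.

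The main obstacle is the $L^\beta$-boundedness step together with the interpolation exponent bookkeeping: one has to use ${\bf H}_1$~\ref{h13} to extract a uniform $L^\beta$-bound on $u_n^+$ from the Cerami inequalities, and then leverage the strict inequality $\beta>N(r-p)/(ps)$ to get the decay $\|u_n^+\|_r^r=o(\|u_n^+\|^p)$ via Gagliardo--Nirenberg-type interpolation in fractional Sobolev spaces (writing $\|w_n\|_r\le \|w_n\|_\beta^{1-\vartheta}\|w_n\|_{\p}^{\vartheta}$ with the appropriate $\vartheta$ and checking that the resulting power of $\|v_n\|$ is strictly less than $p$). The superlinearity at $+\infty$ from ${\bf H}_1$~\ref{h12} is needed to rule out the alternative case where $w\neq 0$ (forcing $F^+_\lambda(x,u_n)/\|u_n\|^p\to\infty$ on $\{w>0\}$ by Fatou, contradicting the energy bound), so the contradiction argument naturally splits into the cases $w=0$ and $w\not\equiv 0$; handling both cleanly is where most of the care goes.
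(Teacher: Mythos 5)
Your proposal follows the paper's proof essentially step for step: kill $u_n^-$ by testing with $-u_n^-$ and Lemma \ref{pn} \ref{pn2}; combine the energy bound with the derivative estimate on the positive part to extract, via ${\bf H}_1$ (iii), a uniform $L^\beta$-bound on $u_n^+$; interpolate between $L^\beta(\Omega)$ and $L^{\p}(\Omega)$ and use $\beta>N(r-p)/(ps)$ to get $\|u_n^+\|_r\le C\|u_n^+\|^\tau$ with $\tau r<p$; conclude boundedness and then strong convergence from the $(S)_+$-property of $\fpl$. Two remarks. First, your justification of the $L^\beta$-bound rests on a false claim: the concave term contributes $\lambda(1-p/q)|t|^q$ to $f^+_\lambda(x,t)t-pF^+_\lambda(x,t)$, and since $q<p$ this quantity is \emph{not} bounded below (it tends to $-\infty$ as $t\to\infty$); it has to be absorbed into the $\tfrac{c_2}{2}t^\beta$ term supplied by ${\bf H}_1$ (iii), which is possible precisely because that hypothesis imposes $\beta>q$ (the paper does this via H\"older's inequality, $\|u_n^+\|_q^q\le C\|u_n^+\|_\beta^q$, leading to $\|u_n^+\|_\beta^\beta\le C(\|u_n^+\|_\beta^q+1)$ and hence to the bound). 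As written, your deduction of the $L^\beta$-bound does not go through without this absorption. Second, the closing dichotomy $w=0$ versus $w\not\equiv 0$, with Fatou's lemma and ${\bf H}_1$ (ii), is superfluous and sits oddly beside your own argument: once $\tau r<p$ is established, the inequality $\|u_n^+\|^p\le C(1+\|u_n^+\|+\|u_n^+\|^{\tau r})$ forces boundedness directly, with no normalization or case analysis, and indeed ${\bf H}_1$ (ii) plays no role in this lemma --- it is used only later to produce the mountain pass geometry.
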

\begin{proof}
As in Section \ref{sec2} we see that $\Phi^+_\lambda\in C^1(\w)$ with derivative given for all $u,\varphi\in\w$ by
\[\langle(\Phi^+_\lambda)'(u),\varphi\rangle = \langle\fpl u,\varphi\rangle-\int_\Omega f^+_\lambda(x,u)\varphi\,dx.\]
Let $(u_n)$ be a sequence in $\w$ s.t.\ $(\Phi^+_\lambda(u_n))$ is bounded in $\R$ and $(1+\|u_n\|)(\Phi^+_\lambda)'(u_n)\to 0$ in $W^{-s,p'}(\Omega)$. Then, there exist $C>0$ and a sequence $(\eps_n)$ with $\eps_n\to 0^+$, s.t.\ for all $n\in\N$
\beq\label{cer1}
\Big|\frac{\|u_n\|^p}{p}-\int_\Omega F^+_\lambda(x,u_n)\,dx\Big| \le C
\eeq
and for all $\varphi\in\w$
\beq\label{cer2}
\Big|\langle\fpl u_n,\varphi\rangle-\int_\Omega f^+_\lambda(x,u_n)\varphi\,dx\Big| \le \frac{\eps_n\|\varphi\|}{1+\|u_n\|}.
\eeq
First we prove that
\beq\label{cer3}
u^-_n\to 0 \ \text{in $\w$.}
\eeq
Choose $\varphi=-u^-_n\in\w$ in \eqref{cer2}, then by Lemma \ref{pn} \ref{pn2} we have for all $n\in\N$
\begin{align*}
\|u^-_n\|^p &\le \langle\fpl u_n,-u^-_n\rangle \\
&\le \int_\Omega f^+_\lambda(x,u_n)(-u^-_n)\,dx+\frac{\eps_n\|u^-_n\|}{1+\|u_n\|} \le \eps_n,
\end{align*}
and the latter tends to $0$ as $n\to\infty$. Next we prove that
\beq\label{cer4}
(u^+_n) \ \text{is bounded in $\w$.}
\eeq
By \eqref{cer1} we have for all $n\in\N$
\[\|u_n\|^p-\int_\Omega pF^+_\lambda(x,u_n)\,dx \le Cp.\]
Besides, by inequality \eqref{dua} and Lemma \ref{pn} \ref{pn1} we have for all $n\in\N$
\[\langle\fpl u_n,u^+_n\rangle \le \|u_n\|^{p-1}\|u^+_n\| \le \|u_n\|^p,\]
which along with \eqref{cer2} with $\varphi=u^+_n\in\w$ yields
\[-\|u_n\|^p+\int_\Omega f^+_\lambda(x,u_n)u^+_n\,dx \le \eps_n.\]
Adding the inequalities above and recalling the definition of $f^+_\lambda$, we have
\[\int_\Omega\big[g(x,u^+_n)u^+_n-pG(x,u^+_n)\big]\,dx \le \lambda\Big(\frac{p}{q}-1\Big)\|u^+_n\|_q^q+C.\]
By ${\bf H}_1$ \ref{h13} we can find $K>0$ s.t.\ for a.e.\ $x\in\Omega$ and all $t>K$
\[g(x,t)t-pG(x,t) \ge \frac{c_2}{2}t^\beta.\]
Also recalling ${\bf H}_1$ \ref{h11}, we can find $C>0$ s.t.\ for all $n\in\N$
\[\int_\Omega\big[g(x,u^+_n)u^+_n-pG(x,u^+_n)\big]\,dx \ge \frac{c_2}{2}\|u^+_n\|_\beta^\beta-C.\]
By the previous relations and H\"older's inequality, we have
\begin{align*}
\|u^+_n\|_\beta^\beta &\le C\big(\|u^+_n\|_q^q+1\big) \\
&\le C\Big[\int_\Omega(u^+_n)^\beta\,dx\Big]^\frac{q}{\beta}|\Omega|^{1-\frac{q}{\beta}}+C \\
&\le C\big(\|u^+_n\|_\beta^q+1),
\end{align*}
which by $q<\beta$ implies that $(u^+_n)$ is bounded in $L^\beta(\Omega)$, and hence in $L^q(\Omega)$. In ${\bf H}_1$ \ref{h11} we may assume $\beta\le r<\p$, so we can find $\tau\in[0,1)$ s.t.\
\[\frac{1}{r} = \frac{1-\tau}{\beta}+\frac{\tau}{\p}.\]
By the interpolation inequality, boundedness of $(u^+_n)$ in $L^\beta(\Omega)$, and the embedding $\w\hookrightarrow L^{\p}(\Omega)$ we have
\[\|u^+_n\|_r \le \|u^+_n\|_\beta^{1-\tau}\|u^+_n\|_{\p}^\tau \le C\|u^+_n\|^\tau.\]
Test \eqref{cer2} with $\varphi=u^+_n\in\w$ and apply Lemma \ref{pn} \ref{pn2} to get
\begin{align}\label{cer5}
\|u^+_n\|^p &\le \lambda\|u^+_n\|_q^q+\int_\Omega g(x,u^+_n)u^+_n\,dx+\eps_n \\
\nonumber&\le \int_\Omega c_1\big[u^+_n+(u^+_n)^r\big]\,dx+C \\
\nonumber &\le C\big(1+\|u^+_n\|_1+\|u^+_n\|_r^r\big) \\
\nonumber &\le C\big(1+\|u^+_n\|+\|u^+_n\|^{\tau r}\big).
\end{align}
We note that, by ${\bf H}_1$ \ref{h13},
\begin{align*}
\frac{1}{r} &< (1-\tau)\frac{ps}{N(r-p)}+\tau\frac{N-ps}{Np} \\
&= \frac{ps}{N(r-p)}+\tau\frac{Nr-Np-psr}{Np(r-p)},
\end{align*}
which by $r<\p$ implies
\[\frac{\tau r}{p}\,\frac{Nr-Np-psr}{N(r-p)} > \frac{Nr-Np-psr}{N(r-p)},\]
and hence $\tau r<p$. So, from \eqref{cer5} we see that $(u^+_n)$ is bounded in $\w$.
\vskip2pt
\noindent
By \eqref{cer3}, \eqref{cer4} $(u_n)$ is bounded in $\w$. Passing to a subsequence, we may assume that $u_n\rightharpoonup u$ in $\w$, $u_n\to u$ in $L^r(\Omega)$. Testing \eqref{cer2} with $\varphi=u_n-u\in\w$, and applying H\"older's inequality, we have
\begin{align*}
\langle\fpl u_n,u_n-u\rangle &\le \lambda\int_\Omega(u^+_n)^{q-1}(u_n-u)\,dx+\int_\Omega g(x,u^+_n)(u_n-u)\,dx+\frac{\eps_n\|u_n-u\|}{1+\|u_n\|} \\
&\le \lambda\|u^+_n\|_q^{q-1}\|u_n-u\|_q+C\big(\|u_n-u\|_1+\|u^+_n\|_r^{r-1}\|u_n-u\|_r+\eps_n\big),
\end{align*}
and the latter tends to $0$ as $n\to\infty$. By the $(S)_+$-property of $\fpl$, we finally have $u_n\to u$ in $\w$. Thus, $\Phi_\lambda^+$ satisfies $(C)$.
\end{proof}

\noindent
Now we can prove the existence of two positive solutions for $\lambda>0$ small enough:

\begin{lemma}\label{pos}
Let ${\bf H}_1$ hold. Then, there exists $\lambda^*>0$ s.t.\ for all $\lambda\in(0,\lambda^*)$ problem \eqref{dir} has at least two positive solutions $u_+,v_+\in{\rm int}(\cs_+)$.
\end{lemma}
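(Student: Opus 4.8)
The plan is to produce two nonzero critical points of the truncated functional $\Phi^+_\lambda$ — one of mountain pass type and one a local minimizer lying in a small ball around the origin — and then to upgrade both to positive solutions of \eqref{dir}. First I would record that every $u\in K(\Phi^+_\lambda)$ is a nonnegative solution of \eqref{dir}: testing $(\Phi^+_\lambda)'(u)=0$ with $-u^-\in\w$, using $f^+_\lambda(x,t)=0$ for $t\le 0$ and Lemma \ref{pn} \ref{pn2}, one gets $\|u^-\|^p\le 0$, hence $u\ge 0$ and $f^+_\lambda(\cdot,u)=f_\lambda(\cdot,u)$, so $u$ solves \eqref{dir}.

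The core of the argument is the mountain pass geometry of $\Phi^+_\lambda$, uniform over small $\lambda$. By ${\bf H}_1$ \ref{h14} and \ref{h11}, for every $\eps>0$ there is $c_\eps>0$ with $G(x,t)\le\frac1p(\eta_2(x)+\eps)|t|^p+c_\eps|t|^r$ for $t\ge 0$; since $\int_\Omega F^+_\lambda(x,u)\,dx=\frac{\lambda}{q}\|u^+\|_q^q+\int_\Omega G(x,u^+)\,dx$, combining this with Lemma \ref{pn} \ref{pn1}, Lemma \ref{wei} applied with $\xi_0=\eta_2$, \eqref{l1}, the embedding $\w\hookrightarrow L^r(\Omega)$ and a small choice of $\eps$, I would derive an estimate
\[\Phi^+_\lambda(u)\ge\sigma\|u\|^p-C\lambda\|u\|^q-C\|u\|^r\qquad\text{for all }u\in\w,\]
with $\sigma>0$. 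Then I fix $\rho>0$ so small that $C\rho^{r-p}\le\sigma/2$ and set $\lambda^*$ of the order of $\rho^{p-q}$, so that $\inf_{\|u\|=\rho}\Phi^+_\lambda\ge m>0$ for all $\lambda\in(0,\lambda^*)$, with $m$ independent of $\lambda$. On the other hand, evaluating along the ray $t\hat u_1$: by ${\bf H}_1$ \ref{h12} (and Fatou's lemma, since $\hat u_1>0$ in $\Omega$) one has $\Phi^+_\lambda(t\hat u_1)\to-\infty$ as $t\to+\infty$, which provides $e=t_0\hat u_1$ with $\|e\|>\rho$ and $\Phi^+_\lambda(e)<0$; and, because $q<p$, using ${\bf H}_1$ \ref{h14} near $0$ we get $\Phi^+_\lambda(t\hat u_1)\le Ct^p-\frac{\lambda}{q}t^q\|\hat u_1\|_q^q<0$ for $t>0$ small, so also $\inf_{\overline{B}_\rho(0)}\Phi^+_\lambda<0$.

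The two solutions then follow. Since $\Phi^+_\lambda(0)=0$, $\Phi^+_\lambda(e)<0<m\le\inf_{\|u\|=\rho}\Phi^+_\lambda$, and $\Phi^+_\lambda$ satisfies the Cerami condition (Lemma \ref{cer}), the mountain pass theorem gives $u_+\in K(\Phi^+_\lambda)$ with $\Phi^+_\lambda(u_+)\ge m>0$. For the second one, $\Phi^+_\lambda$ is sequentially weakly lower semicontinuous (its nonlinear part being sequentially weakly continuous via the compact embedding $\w\hookrightarrow L^r(\Omega)$) and $\overline{B}_\rho(0)$ is weakly compact, so $\Phi^+_\lambda$ attains its infimum over $\overline{B}_\rho(0)$ at some $v_+$; since this infimum is negative while $\Phi^+_\lambda\ge m>0$ on $\{\|u\|=\rho\}$, necessarily $\|v_+\|<\rho$, hence $v_+$ is a local minimizer of $\Phi^+_\lambda$ in $\w$, so $v_+\in K(\Phi^+_\lambda)$ with $\Phi^+_\lambda(v_+)<0$. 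The energy values show $u_+\ne v_+$ and $u_+,v_+\ne 0$, and by the first step both are nonnegative solutions of \eqref{dir}. Finally, Proposition \ref{apb} gives $u_+,v_+\in L^\infty(\Omega)$; replacing $f_\lambda$ by its truncation at the level $\|u_+\|_\infty$ (resp.\ $\|v_+\|_\infty$) and using ${\bf H}_1$ \ref{h14} to bound $g(x,t)$ from below near $t=0$ together with $\lambda t^{q-1}\ge 0$, Proposition \ref{smp} applies and, with the regularity of Proposition \ref{reg}, yields $u_+,v_+\in{\rm int}(\cs_+)$.

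The step I expect to be the main obstacle is the uniform geometry near $0$: the concave term $\lambda|t|^{q-2}t$ with $q<p$ makes $\Phi^+_\lambda$ negative arbitrarily close to the origin (so, unlike in the symmetric case, $0$ is not a local minimizer), and a positive lower bound on a sphere $\{\|u\|=\rho\}$ must be extracted by a two-scale choice — first $\rho$ small, so that the $p$-homogeneous part dominates the superlinear $r$-growth, and only then $\lambda^*$ small relative to $\rho$, so that the leftover term $C\lambda\rho^q$ stays below, say, $\tfrac12\sigma\rho^p$, which forces $\lambda\ll\rho^{p-q}$. Moreover the $p$-homogeneous part is coercive only thanks to the non-resonance hypotheses $\eta_2\le\lambda_1$, $\eta_2\not\equiv\lambda_1$, entering through Lemma \ref{wei}.
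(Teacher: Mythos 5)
Your proposal is correct and follows essentially the same route as the paper: mountain pass geometry for $\Phi^+_\lambda$ via Lemma \ref{wei} and the growth estimates from ${\bf H}_1$ \ref{h11}, \ref{h14}, unboundedness below along the ray $\tau\hat u_1$ via \ref{h12}, the Cerami condition from Lemma \ref{cer}, a second solution as a minimizer over $\overline{B}_\rho(0)$ made negative by the concave term, and then sign, regularity and the strong maximum principle to land in ${\rm int}(\cs_+)$. The only cosmetic difference is the order of the two-scale choice (the paper optimizes $\rho=\rho(\lambda)$ as the maximizer of an auxiliary function $h$ and lets $\lambda\to 0^+$, whereas you fix $\rho$ first and then take $\lambda^*\sim\rho^{p-q}$), which changes nothing of substance.
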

\begin{proof}
Fix $\lambda>0$ (to be better determined later). We will seek the first positive solution by applying the mountain pass theorem. First, we claim that there exists $\rho>0$ s.t.\
\beq\label{pos1}
\inf_{\|u\|=\rho}\Phi^+_\lambda(u) = m_+ >0.
\eeq
Indeed, by ${\bf H}_1$ \ref{h14} and Lemma \ref{wei}, there exists $\sigma>0$ s.t.\ for all $u\in\w$
\[\|u\|^p-\int_\Omega\eta_2(x)|u|^p\,dx \ge \sigma\|u\|^p.\]
Now fix $\eps\in(0,\sigma\lambda_1)$. By ${\bf H}_1$ \ref{h11} \ref{h14} we can find $C_\eps>0$ s.t.\ for a.e.\ $x\in\Omega$ and all $t\ge 0$
\[G(x,t) \le \frac{\eta_2(x)+\eps}{p}t^p+C_\eps t^r.\]
Set $\sigma'=\sigma-\eps/\lambda_1>0$. For all $u\in\w$ we have $0\le u^+\le|u|$ in $\Omega$, so by the estimates above, \eqref{l1}, and the embeddings of $\w$ we have
\begin{align*}
\Phi^+_\lambda(u) &\ge \frac{\|u\|^p}{p}-\frac{\lambda}{q}\|u^+\|_q^q-\int_\Omega\Big[\frac{\eta_2(x)+\eps}{p}(u^+)^p+C_\eps(u^+)^r\Big]\,dx \\
&\ge \frac{1}{p}\Big[\|u\|^p-\int_\Omega\eta_2(x)|u|^p\,dx\Big]-\frac{\lambda}{q}\|u\|_p^q|\Omega|^\frac{p-q}{p}-C_\eps\|u\|_r^r-\frac{\eps}{p}\|u\|_p^p \\
&\ge \frac{\sigma'}{p}\|u\|^p-\frac{\lambda|\Omega|^\frac{p-q}{p}}{q\lambda_1^\frac{q}{p}}\|u\|^q-C\|u\|^r = h(\|u\|)\|u\|^p,
\end{align*}
where for all $t>0$ we have set
\[h(t) = \frac{\sigma'}{p}-\frac{\lambda|\Omega|^{1-\frac{q}{p}}}{q\lambda_1^\frac{q}{p}}t^{q-p}-Ct^{r-p}.\]
Clearly, we have $h\in C^1(0,\infty)$, $h(t)\to -\infty$ as $t\to 0,\,\infty$ (recall that $q<p<r$). So there is $\rho>0$ s.t.\
\[h(\rho) = \max_{t>0}h(t).\]
We can detect $\rho>0$ by setting $h'(\rho)=0$, which gives
\[\rho = \left[\frac{\lambda|\Omega|^\frac{q-p}{p}(p-q)}{Cq\lambda_1^\frac{q}{p}(r-p)}\right]^\frac{1}{r-q} > 0.\]
In turn, that implies
\[h(\rho) = \frac{\sigma'}{p}-\left[\frac{\lambda|\Omega|^\frac{q-p}{p}}{q\lambda_1^\frac{q}{p}}\right]^\frac{r-p}{r-q}\Big[\frac{p-q}{C(r-p)}\Big]^\frac{q-p}{r-q}-C^\frac{p-q}{r-q}\left[\frac{\lambda|\Omega|^\frac{q-p}{p}(p-q)}{q(r-p)\lambda_1^\frac{q}{p}}\right]^\frac{r-p}{r-q},\]
and the latter tends to $\sigma'/p>0$ as $\lambda\to 0^+$. So there exists $\lambda^*>0$ s.t.\ for all $\lambda\in(0,\lambda^*)$
\[\inf_{\|u\|=\rho}\Phi^+_\lambda(u) \ge h(\rho)\rho^p >0,\]
which proves \eqref{pos1}. Let $\hat u_1\in{\rm int}(\cs_+)$ be as in Section \ref{sec2}, then we have
\beq\label{pos2}
\lim_{\tau\to\infty}\Phi^+_\lambda(\tau\hat u_1) = -\infty.
\eeq
Indeed, by ${\bf H}_1$ \ref{h11} \ref{h12}, for any $M>0$ we can find $C_M>0$ s.t.\ for a.e.\ $x\in\Omega$ and all $t\ge 0$
\[G(x,t) \ge Mt^p-C_M.\]
So, for all $\tau>0$ we have
\begin{align*}
\Phi^+_\lambda(\tau\hat u_1) &\le \frac{\tau^p}{p}\|\hat u_1\|^p-\frac{\tau^q\lambda}{q}\|\hat u_1\|_q^q-\int_\Omega\big(Mt^p(\hat u_1)^p-C_M\big)\,dx \\
&\le \Big(\frac{\lambda_1}{p}-M\Big)\tau^p-\frac{\tau^q\lambda}{q}\|\hat u_1\|_q^q-C_M|\Omega|,
\end{align*}
an the latter tends to $-\infty$ as $\tau\to\infty$, as soon as we choose $M>\lambda_1/p$. By \eqref{pos1}, \eqref{pos2} $\Phi^+_\lambda$ exhibits a mountain pass geometry, while by Lemma \ref{cer} it satisfies $(C)$. By the mountain pass theorem (see for instance \cite[Theorem 5.40]{MMP}) there exists $u_+\in K(\Phi^+_\lambda)$ s.t.\
\[\Phi^+_\lambda(u_+) \ge m_+.\]
By \eqref{pos1} we have $u_+\neq 0$. Testing $(\Phi^+_\lambda)'(u_+)=0$ with $-u_+^-\in\w$ and recalling Lemma \ref{pn} \ref{pn2}, we have
\begin{align*}
\|u_+^-\|^p &\le \langle\fpl u_+,-u_+^-\rangle \\
&= \int_\Omega f_\lambda^+(x,u_+)(-u_+^-)\,dx = 0,
\end{align*}
so $u_+\in\w_+\setminus\{0\}$. That in turn implies that $u_+$ solves \eqref{dir}. Since $f_\lambda$ satisfies ${\bf H}_0$, by Proposition \ref{reg} we have $u_+\in C^\alpha_s(\overline\Omega)$. Further, by ${\bf H}_1$ \ref{h12} \ref{h14} we can find $C>0$ s.t.\ for a.e.\ $x\in\Omega$ and all $t\ge 0$
\[f_\lambda(x,t) \ge -Ct^{p-1}.\]
By Proposition \ref{smp} we have $u_+\in{\rm int}(\cs_+)$.
\vskip2pt
\noindent
Now we seek a second positive solution. By ${\bf H}_1$ \ref{h14} we can find $\delta,c>0$ s.t.\ for a.e.\ $x\in\Omega$ and all $t\in[0,\delta]$
\[G(x,t) \ge -ct^p.\]
Since $\hat u_1\in{\rm int}(\cs_+)$, for all $\tau>0$ small enough we have $0<\tau\hat u_1\le\delta$ in $\Omega$, so
\begin{align*}
\Phi^+_\lambda(\tau\hat u_1) &\le \frac{\tau^p}{p}\|\hat u_1\|^p-\frac{\lambda\tau^q}{q}\|\hat u_1\|_q^q+c\tau^p\|\hat u_1\|_p^p \\
&= \Big(\frac{\lambda_1}{p}+c\Big)\tau^p-\frac{\lambda}{q}\tau^q,
\end{align*}
and the latter is negative for all $\tau>0$ small enough. So, by \eqref{pos1} we have
\beq\label{pos3}
\inf_{\|u\|\le\rho}\Phi^+_\lambda(u) < 0 <m_+.
\eeq
Since $\Phi^+_\lambda\in C^1(\w)$ is sequentially weakly l.s.c., there exists $v_+\in\overline B_\rho(0)$ s.t.\
\[\Phi^+_\lambda(v_+) = \inf_{\|u\|\le\rho}\Phi^+_\lambda(u).\]
By \eqref{pos1} and \eqref{pos3} we have $\|v_+\|<\rho$, so $v_+\in K(\Phi^+_\lambda)$ is a local minimizer of $\Phi^+_\lambda$ (not a global one, due to \eqref{pos2}). Besides, since
\[\Phi^+_\lambda(v_+) < 0 < m_+ \le \Phi^+_\lambda(u_+),\]
we deduce $v_+\neq 0,u_+$. Arguing as above, we conclude that $v_+\in{\rm int}(\cs_+)$ solves \eqref{dir} and complete the proof.
\end{proof}

\noindent
The existence of a negative solution is achieved by combining truncations and direct methods. Notably, this holds for any $\lambda>0$:

\begin{lemma}\label{neg}
Let ${\bf H}_1$ hold. Then, for all $\lambda>0$ problem \eqref{dir} has at least one negative solution $u_-\in -{\rm int}(\cs_+)$.
\end{lemma}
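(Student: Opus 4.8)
The plan is to exploit the asymmetric structure: near $-\infty$ the reaction $g(x,\cdot)$ is at most $(p-1)$-linear with nonresonant constant $\theta\not\equiv\lambda_1$, while the sublinear term $\lambda|t|^{q-2}t$ controls the behaviour near $0$. This makes $\Phi^-_\lambda$ coercive and weakly lower semicontinuous, so a global minimizer exists and one only needs to check it is nontrivial.

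First I would record the variational setup: $\Phi^-_\lambda\in C^1(\w)$ with
\[\langle(\Phi^-_\lambda)'(u),\varphi\rangle = \langle\fpl u,\varphi\rangle-\int_\Omega f^-_\lambda(x,u)\varphi\,dx,\]
and, as in the proof of Lemma \ref{pos}, a critical point $u_-$ of $\Phi^-_\lambda$ is automatically $\le 0$: testing with $u_-^+\in\w$ and using Lemma \ref{pn} \ref{pn2} gives $\|u_-^+\|^p\le\langle\fpl u_-,u_-^+\rangle=\int_\Omega f^-_\lambda(x,u_-)u_-^+\,dx=0$, since $f^-_\lambda(x,t)=f_\lambda(x,-t^-)$ vanishes for $t\ge0$. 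Hence $u_-\in-\w_+$, and then $u_-$ solves \eqref{dir}; Proposition \ref{reg} gives $u_-\in C^\alpha_s(\overline\Omega)$, and applying Proposition \ref{smp} to $-u_-$ (the bound $f_\lambda(x,-t)\ge-Ct^{p-1}$ for $t\ge0$ follows from ${\bf H}_1$ \ref{h14} and \ref{h15} exactly as in the positive case) yields $u_-\in-{\rm int}(\cs_+)$, provided $u_-\ne0$.

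Next, the coercivity of $\Phi^-_\lambda$. By ${\bf H}_1$ \ref{h15}, fix $\eps>0$ small so that $\theta+\eps\le\lambda_1$ with $\theta+\eps\not\equiv\lambda_1$ is still usable in Lemma \ref{wei}; then there is $C_\eps>0$ with $G(x,t)\le\frac{\theta(x)+\eps}{p}|t|^p+C_\eps$ for a.e.\ $x\in\Omega$ and all $t\le0$ (using also \ref{h11} to absorb the bounded remainder near $0$). Since $F^-_\lambda(x,u)$ only involves $u^-$, and $\frac{\lambda}{q}\|u^-\|_q^q\le C\|u\|^q$, I get
\[\Phi^-_\lambda(u) \ge \frac1p\Big[\|u\|^p-\int_\Omega(\theta(x)+\eps)|u^-|^p\,dx\Big]-C\|u\|^q-C \ge \frac{\tilde\sigma}{p}\|u\|^p-C\|u\|^q-C\]
for some $\tilde\sigma>0$ by Lemma \ref{wei} (applied with $\xi_0=\theta+\eps$, after extending the estimate $\int_\Omega(\theta+\eps)|u^-|^p\le\int_\Omega(\theta+\eps)|u|^p$ using $\theta+\eps\ge0$ and $|u^-|\le|u|$); since $q<p$ this shows $\Phi^-_\lambda(u)\to+\infty$ as $\|u\|\to\infty$. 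As $\Phi^-_\lambda$ is sequentially weakly l.s.c.\ (convex leading term plus weakly continuous compact lower-order part), it attains its infimum at some $u_-\in\w$.

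Finally, nontriviality: I claim $\Phi^-_\lambda(u_-)<0=\Phi^-_\lambda(0)$, forcing $u_-\ne0$. As in Lemma \ref{pos}, from ${\bf H}_1$ \ref{h14} there are $\delta,c>0$ with $G(x,t)\ge-c|t|^p$ for a.e.\ $x\in\Omega$ and all $t\in[-\delta,0]$; choosing $\tau>0$ small so that $-\tau\hat u_1\in[-\delta,0]$ in $\Omega$, and noting $f^-_\lambda(x,-\tau\hat u_1)=f_\lambda(x,-\tau\hat u_1)$ so $F^-_\lambda(x,-\tau\hat u_1)=\frac{\lambda\tau^q}{q}\hat u_1^q+G(x,-\tau\hat u_1)$, I estimate
\[\Phi^-_\lambda(-\tau\hat u_1) \le \frac{\tau^p}{p}\|\hat u_1\|^p-\frac{\lambda\tau^q}{q}\|\hat u_1\|_q^q+c\tau^p\|\hat u_1\|_p^p = \Big(\frac{\lambda_1}{p}+c\Big)\tau^p-\frac{\lambda}{q}\tau^q,\]
which is negative for $\tau>0$ small since $q<p$. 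Hence $\inf\Phi^-_\lambda<0$ and $u_-\ne0$. Then $u_-\in-{\rm int}(\cs_+)$ solves \eqref{dir}, as shown above.

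The main obstacle I anticipate is not any single estimate but making sure the truncation bookkeeping is right—keeping straight that $f^-_\lambda(x,t)=f_\lambda(x,-t^-)$ kills the positive part so that critical points of $\Phi^-_\lambda$ are genuinely $\le0$ and solve the original problem \eqref{dir}, and that the nonresonance constant $\theta$ (not $\eta_2$) is the one that must be fed into Lemma \ref{wei} for coercivity. The regularity/positivity conclusion $u_-\in-{\rm int}(\cs_+)$ is routine once $u_-\ne0$ is in hand, by the same Proposition \ref{reg}–Proposition \ref{smp} chain used for $u_+$.
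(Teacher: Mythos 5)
Your proposal follows essentially the same route as the paper: coercivity of $\Phi^-_\lambda$ via ${\bf H}_1$ (v) and Lemma \ref{wei}, weak lower semicontinuity to get a global minimizer, negativity of the infimum along $-\tau\hat u_1$ using ${\bf H}_1$ (iv), the sign of the minimizer via testing with $u_-^+$ and Lemma \ref{pn}, and finally Propositions \ref{reg} and \ref{smp}. All the main steps are present and correct.

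One technical point in the coercivity step needs repair: you invoke Lemma \ref{wei} with $\xi_0=\theta+\eps$, which requires $\theta+\eps\le\lambda_1$ and $\theta+\eps\not\equiv\lambda_1$. Under ${\bf H}_1$ (v) one only knows $\theta\le\lambda_1$ with $\theta\not\equiv\lambda_1$, so $\theta$ may equal $\lambda_1$ on a set of positive measure and then $\theta+\eps\le\lambda_1$ fails for every $\eps>0$. The correct (and standard) fix, which is what the paper does, is to apply Lemma \ref{wei} to $\xi_0=\theta$ itself, obtaining $\|u\|^p-\int_\Omega\theta|u|^p\,dx\ge\sigma\|u\|^p$, and to absorb the extra term $\frac{\eps}{p}\int_\Omega|u|^p\,dx\le\frac{\eps}{p\lambda_1}\|u\|^p$ via \eqref{l1}, choosing $\eps<\sigma\lambda_1$ at the end. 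A second, very minor slip: to apply Proposition \ref{smp} to $w=-u_-$ (which solves $\fpl w=-f_\lambda(x,-w)$) the bound you need is $f_\lambda(x,-t)\le Ct^{p-1}$ for $t\ge 0$ on the relevant bounded range, i.e.\ the opposite direction from the one you wrote; it follows from ${\bf H}_1$ (i) and (iv) together with the boundedness of $u_-$. Neither issue affects the overall architecture, which coincides with the paper's proof.
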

\begin{proof}
Fix $\lambda>0$ and recall the definition of $\Phi^-_\lambda\in C^1(\w)$. We prove first that $\Phi^-_\lambda$ is coercive. Indeed, by ${\bf H}_1$ \ref{h11} \ref{h15}, for any $\eps>0$ we can find $C_\eps>0$ s.t.\ for a.e.\ $x\in\Omega$ and all $t\le 0$
\[G(x,t) \le \frac{\theta(x)+\eps}{p}|t|^p+C_\eps.\]
Besides, by Lemma \ref{wei} we can find $\sigma>0$ s.t.\ for all $u\in\w$
\[\|u\|^p-\int_\Omega\theta(x)|u|^p\,dx \ge \sigma\|u\|^p.\]
So, recalling that $0\le u^-\le|u|$ in $\Omega$ and using \eqref{l1}, we have
\begin{align*}
\Phi^-_\lambda(x) &\ge \frac{\|u\|^p}{p}-\frac{\lambda}{q}\|u^-\|_q^q-\int_\Omega\Big[\frac{\theta(x)+\eps}{p}(u^-)^p+C_\eps\Big]\,dx \\
&\ge \Big(\sigma-\frac{\eps}{\lambda_1}\Big)\frac{\|u\|^p}{p}-C\|u\|^q-C,
\end{align*}
and the latter tends to $\infty$ as $\|u\|\to\infty$, as soon as we choose $\eps<\sigma\lambda_1$. Also, $\Phi^-_\lambda$ is sequentially weakly l.s.c.\ in $\w$, so there exists $u_-\in\w$ s.t.\
\beq\label{neg1}
\Phi^-_\lambda(u_-) = \inf_{u\in\w}\Phi^-_\lambda(u) = m_-.
\eeq
By ${\bf H}_1$ \ref{h14} we can find $c,\delta>0$ s.t.\ for a.e.\ $x\in\Omega$ and all $t\in[-\delta,0]$
\[G(x,t) \ge -c|t|^p.\]
Since $\hat u_1\in{\rm int}(\cs_+)$, for all $\tau>0$ small enough we have $-\delta<-\tau\hat u_1<0$ in $\Omega$, so
\begin{align*}
\Phi^-_\lambda(-\tau\hat u_1) &\le \frac{\tau^p}{p}\|\hat u_1\|^p-\frac{\lambda\tau^q}{q}\|\hat u_1\|_q^q+c\tau^p\|\hat u_1\|_p^p \\
&= \Big(\frac{\lambda_1}{p}+c\Big)\tau^p-\frac{\lambda\tau^q}{q}\|\hat u_1\|_q^q,
\end{align*}
and the latter is negative for all $\tau>0$ small enough. So we deduce $m_-<0$, hence by \eqref{neg1} we have $u_-\neq 0$. Testing $(\Phi^-_\lambda)'(u_-)=0$ with $u_-^+\in\w$ and recalling Lemma \ref{pn} \ref{pn2}, we have
\begin{align*}
\|u_-^+\|^p &\le \langle\fpl u_-,u_-^+\rangle \\
&= \int_\Omega f^-_\lambda(x,u_-)u_-^+\,dx = 0,
\end{align*}
so $u_-\in -\w_+\setminus\{0\}$. Arguing as in the proof of Lemma \ref{pos} and applying Propositions \ref{reg} and \ref{smp}, we see that $u_-\in -{\rm int}(\cs_+)$ is a negative solution of \eqref{dir}.\end{proof}

\noindent
Combining Lemmas \ref{pos} and \ref{neg}, we achieve our result on constant sign solutions:

\begin{theorem}\label{css}
Let ${\bf H}_1$ hold. Then, there exists $\lambda^*>0$ s.t.\ for all $\lambda\in(0,\lambda^*)$ problem \eqref{dir} has at least two positive solutions $u_+,v_+\in{\rm int}(\cs_+)$ and a negative solution $u_-\in -{\rm int}(\cs_+)$.
\end{theorem}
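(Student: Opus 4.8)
The statement is a direct assembly of the two preceding lemmas, so the plan is essentially bookkeeping: I would simply invoke Lemma \ref{pos} and Lemma \ref{neg} together, being careful about the range of the parameter. Concretely, let $\lambda^*>0$ be the threshold furnished by Lemma \ref{pos}. Then for every $\lambda\in(0,\lambda^*)$, Lemma \ref{pos} produces two positive solutions $u_+,v_+\in{\rm int}(\cs_+)$ of \eqref{dir} with $u_+\neq v_+$ (the argument there distinguishes them through the energy inequality $\Phi^+_\lambda(v_+)<0<m_+\le\Phi^+_\lambda(u_+)$). Since Lemma \ref{neg} holds for \emph{all} $\lambda>0$, it applies in particular to $\lambda\in(0,\lambda^*)$ and yields a negative solution $u_-\in-{\rm int}(\cs_+)$ of \eqref{dir}.

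It then remains to check that these three solutions are pairwise distinct and nontrivial. Nontriviality is immediate since $0\notin{\rm int}(\cs_+)$ and $0\notin-{\rm int}(\cs_+)$. For distinctness, the only new point beyond $u_+\neq v_+$ is that $u_-$ differs from both $u_+$ and $v_+$: this is clear because ${\rm int}(\cs_+)\cap\big({-}{\rm int}(\cs_+)\big)=\varnothing$ (a function in the former cone is strictly positive in $\Omega$, one in the latter is strictly negative). Hence $u_+,v_+,u_-$ are three distinct nontrivial solutions of \eqref{dir}, which is exactly the claim with the same $\lambda^*$ as in Lemma \ref{pos}.

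There is no real obstacle here — all the analytic work (the mountain pass geometry and Cerami condition for $\Phi^+_\lambda$, the coercivity and direct minimization for $\Phi^-_\lambda$, and the regularity and strong maximum principle placing the solutions in the appropriate order cones) has already been carried out in Lemmas \ref{cer}, \ref{pos}, \ref{neg} via Propositions \ref{reg} and \ref{smp}. The only thing to be mildly careful about is that the parameter restriction $\lambda\in(0,\lambda^*)$ is dictated solely by the positive-solution part, while the negative solution imposes no restriction, so no further shrinking of $\lambda^*$ is needed; the proof is therefore a one-line combination.
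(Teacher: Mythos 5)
Your proposal is correct and matches the paper exactly: the paper itself derives Theorem \ref{css} as a one-line combination of Lemmas \ref{pos} and \ref{neg}, with the threshold $\lambda^*$ coming solely from the positive-solution lemma. Your additional remarks on distinctness and nontriviality are accurate but already implicit in the lemmas' conclusions.
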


\begin{remark}\label{rcs}
We briefly outline that multiple constant sign solutions could be ensured under an alternative set of assumptions involving asymmetric reactions (see for instance \cite{IMP}). In particular, the pure power term $|u|^{q-2}u$ can be replaced by any Carath\'eodory mapping $h:\Omega\times\R\to\R$ with $(p-1)$-sublinear growth at $\pm\infty$ and satisfying a kind of reverse Ambrosetti-Rabinowitz condition at $0$. Moreover, the subcritical growth condition ${\bf H}_1$ \ref{h11} on $g(x,\cdot)$ can be weakened to a 'quasi-critical' one, namely, one may assume
\[\lim_{t\to\infty}\frac{g(x,t)}{t^{\p-1}} = 0 \ \text{uniformly for a.e.\ $x\in\Omega$.}\]
In such a case, however, a quasi-monotonicity condition must be required for the whole reaction $f_\lambda$ in order to retrieve the $(C)$-condition.
\end{remark}

\section{Extremal constant sign solutions and nodal solution}\label{sec4}

\noindent
In this section we get more precise information on constant sign solutions of \eqref{dir}, proving the existence of a smallest positive and a biggest negative solution, then we exploit such information to detect a nodal solution. To do so, we need to strengthen a bit our hypotheses on the perturbation $g$:
\begin{itemize}[leftmargin=1cm]
\item[${\bf H}_2$] $g:\Omega\times\R\to\R$ is a Carath\'eodory function, we set $\displaystyle G(x,t) = \int_0^t g(x,\tau)\,d\tau$ for all $(x,t)\in\R$, and
\begin{enumroman}
\item\label{h21} there exist $c_1>0$, $r\in(p,\p)$ s.t.\ for a.e.\ $x\in\R$ and all $t\in\R$
\[|g(x,t)|\le c_1(1+|t|^{r-1});\]
\item\label{h22} uniformly for a.e.\ $x\in\Omega$
\[\lim_{t\to\infty}\frac{G(x,t)}{t^p}=\infty;\]
\item\label{h23} there exist $c_2,\beta>0$, with $\displaystyle\max\Big\{q,\,\frac{N(r-p)}{ps}\Big\}<\beta<\p$ s.t.\ uniformly for a.e.\ $x\in\Omega$
\[\liminf_{t\to\infty}\frac{g(x,t)t-pG(x,t)}{t^\beta}\ge c_2;\]
\item\label{h24} uniformly for a.e.\ $x\in\Omega$
\[\lim_{t\to 0}\frac{g(x,t)}{|t|^{p-2}t}=0;\]
\item\label{h25} there exists $\theta\in L^\infty(\Omega)_+$ s.t.\ $\theta\le\lambda_1$ in $\Omega$, $\theta\not\equiv\lambda_1$, and uniformly for a.e.\ $x\in\Omega$
\[\limsup_{t\to -\infty}\frac{G(x,t)}{|t|^p}\le\frac{\theta(x)}{p};\]
\item\label{h26} there exist $\delta_1>0$ s.t.\ for a.e.\ $x\in\Omega$ and all $|t|\le\delta_1$
\[g(x,t)t\ge 0.\]
\end{enumroman}
\end{itemize}
Clearly ${\bf H}_2$ \ref{h21} -- \ref{h25} imply ${\bf H}_1$, so all results of Sections \ref{sec2} and \ref{sec3} still hold. In addition, we assume that $g(x,\cdot)$ is $(p-1)$-superlinear at $0$ (see \ref{h24}) and satisfies a local sign condition near zero (see \ref{h26}).

\begin{example}\label{eh2}
The following autonomous mapping $g\in C(\R)$ satisfies ${\bf H}_2$:
\[g(t) = \begin{cases}
|t|^{\gamma-2}t & \text{if $t< -1$} \\
|t|^{r-2}t & \text{if $t\ge -1$,}
\end{cases}\]
with $1<\gamma<p<r<\p$ (set $\beta=r$ in \ref{h23}).
\end{example}

\noindent
Taking $\lambda>0$ even smaller if necessary, problem \eqref{dir} admits extremal constant sign solutions. Unlike in \cite{FI} (where the reaction is $(p-1)$-linear at $0$ without resonance with the principal eigenvalue), the result is obtained by constructing a sub-supersolution pair by means of auxiliary problems and using the comparison result of Theorem \ref{com}:

\begin{lemma}\label{ext}
Let ${\bf H}_2$ hold. Then, there exists $\lambda_*>0$ s.t.\ for all $\lambda\in(0,\lambda_*)$ problem \eqref{dir} admits
\begin{enumroman}
\item\label{ext1} a smallest positive solution $w_+\in{\rm int}(\cs_+)$, $\|w_+\|_\infty\le\delta_1$;
\item\label{ext2} a biggest negative solution $w_-\in -{\rm int}(\cs_+)$, $\|w_-\|_\infty\le\delta_1$.
\end{enumroman}
\end{lemma}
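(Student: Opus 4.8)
The plan is to construct a sub-supersolution pair for the positive solutions (and symmetrically for the negative ones) whose solutions are trapped in a small $L^\infty$-ball, then invoke Proposition \ref{sss} to get smallest and biggest elements of the corresponding solution sets, and finally use Theorem \ref{com} to pass from "smallest solution in a trapping region" to "smallest positive solution of \eqref{dir}" outright. I only discuss part \ref{ext1}; part \ref{ext2} is analogous with obvious sign changes.

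First I would produce a \emph{subsolution}. By ${\bf H}_2$ \ref{h24} and \ref{h26}, for every $\eps>0$ there is $\delta\in(0,\delta_1]$ with $g(x,t)\ge-\eps t^{p-1}$ for a.e.\ $x\in\Omega$ and all $t\in[0,\delta]$, and in fact $g(x,t)t\ge0$ there. Consider the auxiliary problem with reaction $\lambda t^{q-1}$ (truncated at height $\delta$), i.e.\ look for $\underline u$ solving $\fpl\underline u=\lambda\,\underline u^{q-1}$ weakly with $\underline u\le\delta$; by the sublinearity $q<p$ this auxiliary problem has a solution $e_\lambda\in{\rm int}(\cs_+)$ (minimize the corresponding coercive functional), and crucially $\|e_\lambda\|_\infty\to0$ as $\lambda\to0^+$ — this follows from the a priori bound Proposition \ref{apb} together with the fact that the energy level, hence $\|e_\lambda\|$, goes to $0$ with $\lambda$. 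Hence for $\lambda$ small enough $\|e_\lambda\|_\infty\le\delta$, and then $\fpl e_\lambda=\lambda e_\lambda^{q-1}\le\lambda e_\lambda^{q-1}+g(x,e_\lambda)=f_\lambda(x,e_\lambda)$ in $\Omega$ (using $g(x,e_\lambda)\ge0$ from \ref{h26}), so $e_\lambda=:\underline u$ is a subsolution of \eqref{dir}.

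Next I would produce a \emph{supersolution} sitting above it. The natural candidate is the positive solution $u_+\in{\rm int}(\cs_+)$ from Lemma \ref{pos} (which is an actual solution, hence a supersolution), but we must ensure $\underline u\le u_+$; alternatively, and more cleanly for the $\delta_1$-bound, take the constant $\bar u\equiv\delta_1$ and check it is a supersolution: since $\fpl\delta_1=0$ in $\Omega$ (the nonlocal operator kills constants in the interior once we recall $\delta_1$ is $0$-extended — here one must instead take $\bar u$ to be the solution of $\fpl\bar u=0$ in a slightly larger set, or more simply argue that by ${\bf H}_2$ \ref{h26} and continuity we may shrink so that $f_\lambda(x,t)\le 0$ fails, so this route needs care). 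The robust choice is: let $\bar u$ be \emph{any} solution of \eqref{dir} in $\cs_+$ obtained in Lemma \ref{pos}; then $(\min\{\underline u,\delta_1\},\bar u)$ — after replacing $\underline u$ by $\min\{\underline u,\bar u\}$, which is still a subsolution by the lattice structure discussed in \cite{FI} — is a sub-supersolution pair, and $\mathcal S(\underline u,\bar u)$ is nonempty with a smallest element $w_+$ by Proposition \ref{sss}. One checks $\|w_+\|_\infty\le\delta_1$ because every such $w_+$ satisfies $\fpl w_+=f_\lambda(x,w_+)$ and, for the part where $w_+>\delta_1$, hypothesis \ref{h26} no longer gives positivity of $g$, but \ref{h22}--\ref{h24} force $w_+\le\delta_1$ provided $\lambda$ is small, via a comparison with $\underline u$ and the a priori estimate; this is the delicate quantitative point.

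The final step is upgrading "smallest solution in $[\underline u,\bar u]$" to "smallest positive solution of \eqref{dir}." Here is where Theorem \ref{com} enters: any positive solution $\tilde u\in{\rm int}(\cs_+)$ of \eqref{dir} is a supersolution, and $\underline u=e_\lambda$ is a subsolution; to compare them we need the monotonicity hypothesis of Theorem \ref{com}, i.e.\ $t\mapsto f_\lambda(x,t)/t^{p-1}$ decreasing on $(0,\infty)$. For the part near $0$ this holds because $\lambda t^{q-1}/t^{p-1}=\lambda t^{q-p}$ is decreasing ($q<p$) and $g(x,t)/t^{p-1}\to0$ by \ref{h24}, but globally on $(0,\infty)$ this need not follow from ${\bf H}_2$; the fix — and I expect this to be \textbf{the main obstacle} — is to apply Theorem \ref{com} to a \emph{truncated} reaction $\hat f_\lambda(x,t)=f_\lambda(x,\min\{t,\delta_1\})$ (or to the auxiliary problem directly), for which the monotonicity ratio can be arranged by choosing $\lambda$ small, deduce $\underline u\le\tilde u$ in $\Omega$ for every positive solution $\tilde u$, hence $\underline u\le\bigwedge\{\text{positive solutions}\}$, and then note $w_+=\min\mathcal S(\underline u,\bar u)$ coincides with this infimum since every positive solution $\le\bar u$ is caught in the interval (again using the a priori $\delta_1$-bound, which we must therefore establish for \emph{all} positive solutions with small $\lambda$, not just those in the interval). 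Once $w_+$ is identified as the smallest positive solution, Propositions \ref{reg} and \ref{smp} give $w_+\in{\rm int}(\cs_+)$, completing \ref{ext1}; part \ref{ext2} follows by the symmetric construction using \ref{h25} for the supersolution at $-\infty$ and \ref{h26} again near $0$.
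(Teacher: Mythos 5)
Your skeleton (build a positive sub--supersolution pair, extract the smallest element via Proposition \ref{sss}, and invoke Theorem \ref{com}) is the same as the paper's, and you correctly flag the two delicate points; but neither of your proposed resolutions works, and the role you assign to Theorem \ref{com} is not the one it can actually play here.

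First, the $\|w_+\|_\infty\le\delta_1$ bound. Taking $\bar u=u_+$ (or $v_+$) from Lemma \ref{pos} gives no control whatsoever on $\|w_+\|_\infty$, and the bound cannot be rescued by proving $\|u\|_\infty\le\delta_1$ for \emph{all} positive solutions at small $\lambda$: the problem is $(p-1)$-superlinear at $+\infty$ and has at least two positive solutions, and the lemma itself only claims the bound for the smallest one. The paper's device, which you are missing, is to build a \emph{small explicit supersolution}: it takes the torsion function $v$ solving $\fpl v=1$ and sets $\overline u=\tau v$ with $\tau\in(0,\delta_1/\|v\|_\infty)$ chosen so that $\lambda\|\tau v\|_\infty^{q-1}+\eps\|\tau v\|_\infty^{p-1}+C_\eps\|\tau v\|_\infty^{r-1}<\tau^{p-1}$ (possible exactly for $\lambda<\lambda_*$). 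Then $\overline u\le\delta_1$ by construction and the bound on $w_+$ is automatic. Also note your side remark is wrong: $\min\{\underline u,\bar u\}$ of a subsolution and a solution is \emph{not} a subsolution; the lattice facts in \cite{FI} preserve minima of supersolutions and maxima of subsolutions only.

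Second, minimality. With a single fixed subsolution $\underline u=e_\lambda>0$ you cannot conclude $w_+\le u$ for an arbitrary positive solution $u$, because $u$ may dip below $e_\lambda$. Your fix --- apply Theorem \ref{com} (to a truncation) to get $e_\lambda\le u$ --- fails: for $u$ to be a supersolution of the auxiliary problem $\fpl w=\lambda (w^+)^{q-1}$ one needs $g(x,u)\ge 0$, which ${\bf H}_2$ \ref{h26} gives only where $u\le\delta_1$; an arbitrary positive solution exceeds $\delta_1$ in general, and truncating the reaction does not make $u$ a supersolution of the truncated problem either. The paper sidesteps this with a \emph{decreasing family} of subsolutions $\underline u_k=\hat u_1/k$: since any positive solution $u$ lies in ${\rm int}(\cs_+)$, it dominates $\underline u_k$ for $k$ large; then $\min\{u,\overline u\}$ (a minimum of two \emph{super}solutions, hence a supersolution) traps a solution between $\underline u_k$ and $u$, giving $w_k\le u$, and one lets $k\to\infty$. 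The price of this scheme is that $w_+=\lim_k w_k$ could a priori be $0$, and \emph{that} is where Theorem \ref{com} is actually used: if $w_k\to 0$ then $w_k\to 0$ in $\cs$, so eventually $0<w_k<\delta_1$ and $w_k$ is a supersolution of $\fpl\hat v=\lambda(\hat v^+)^{q-1}$, whose reaction has decreasing ratio $\lambda t^{q-p}$; comparison with its positive solution $\hat v\in{\rm int}(\cs_+)$ gives $\hat v\le w_k\to 0$, a contradiction. As it stands, your argument proves neither the $\delta_1$-bound nor minimality, so the gap is genuine.
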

\begin{proof}
We prove \ref{ext1}. First we consider the following torsion problem:
\beq\label{ext3}
\begin{cases}
\fpl v = 1 & \text{in $\Omega$} \\
v = 0 & \text{in $\Omega^c$.}
\end{cases}
\eeq
By direct variational methods (minimization) and Proposition \ref{smp}, we see that \eqref{ext3} has a unique solution $v\in{\rm int}(\cs_+)$. Fix $\eps\in(0,\|v\|_\infty^{1-p})$, then by ${\bf H}_2$ \ref{h21} \ref{h24} we can find $C_\eps>0$ s.t.\ for a.e.\ $x\in\Omega$ and all $t\ge 0$
\[g(x,t) \le \eps t^{p-1}+C_\eps t^{r-1}.\]
We claim that there exists $\lambda_*>0$ with the following property: for all $\lambda\in(0,\lambda_*)$ there is $\tau\in(0,\delta_1/\|v\|_\infty)$ s.t.\
\beq\label{ext4}
\lambda\|\tau v\|_\infty^{q-1}+\eps\|\tau v\|_\infty^{p-1}+C_\eps\|\tau v\|_\infty^{r-1} < \tau^{p-1}.
\eeq
Arguing by contradiction, let $(\lambda_n)$ be a sequence s.t.\ $\lambda_n\to 0^+$ and for all $n\in\N$, $\tau\in(0,\delta_1/\|v\|_\infty)$
\[\tau^{p-1} \le \lambda_n\|\tau v\|_\infty^{q-1}+\eps\|\tau v\|_\infty^{p-1}+C_\eps\|\tau v\|_\infty^{r-1}.\]
Then, letting $n\to\infty$ and dividing by $\tau^{p-1}>0$ we have
\[1 \le \eps\|v\|_\infty^{p-1}+C_\eps\tau^{r-p}\|v\|_\infty^{r-1}.\]
Now, letting $\tau\to 0^+$ and recalling that $r>p$ we get
\[1 \le \eps\|v\|_\infty^{p-1},\]
a contradiction. So \eqref{ext4} is achieved. Now fix $\lambda\in(0,\lambda_*)$, $\tau\in(0,\delta_1/\|v\|_\infty)$ satisfying \eqref{ext4}, and set
\[\overline{u} = \tau v \in{\rm int}(\cs_+).\]
Then, by \eqref{ext3} and the estimate on $f$ we have weakly in $\Omega$
\begin{align*}
\fpl\overline{u} &= \tau^{p-1} \\
&> \lambda\|\overline{u}\|_\infty^{q-1}+\eps\|\overline{u}\|_\infty^{p-1}+C_\eps\|\overline{u}\|_\infty^{r-1} \\
&\ge \lambda\overline{u}^{q-1}+g(x,\overline{u}),
\end{align*}
i.e., $\overline{u}\in{\rm int}(\cs_+)$ is a (strict) supersolution of \eqref{dir} satisfying $0<\overline{u}\le\delta_1$ in $\Omega$.
\vskip2pt
\noindent
For all $k\in\N$ set $\underline{u}_k=\hat u_1/k\in{\rm int}(\cs_+)$ (with $\hat u_1$ defined as in Section \ref{sec2}). Clearly, $\underline{u}_k\to 0$ uniformly in $\overline\Omega$, so for all $k\in\N$ big enough we have $\underline{u}_k<\overline{u}$ (in particular, $0<\underline{u}_k<\delta_1$) in $\Omega$, and $\lambda_1\underline{u}_k^{p-q}<\lambda$ in $\Omega$. By ${\bf H}_2$ \ref{h26} and the inequalities above, we have weakly in $\Omega$
\begin{align*}
\fpl\underline{u}_k &= \frac{\lambda_1}{k^{p-1}}\hat u_1^{p-1} \\
&= \lambda_1\underline{u}_k^{p-1} \\
&< \lambda\underline{u}_k^{q-1}+g(x,\underline{u}_k).
\end{align*}
So, for all $k\in\N$ big enough $\underline{u}_k\in{\rm int}(\cs_+)$ is a (strict) subsolution of \eqref{dir} s.t.\ $\underline{u}_k<\overline{u}$ in $\Omega$, namely $(\underline{u}_k,\overline{u})$ is a sub-supersolution pair of \eqref{dir}. By Proposition \ref{sss}, the set
\[\mathcal{S}(\underline{u}_k,\overline{u}) = \big\{w\in\w: \ \text{$w$ is a solution of \eqref{dir}, $\underline{u}_k\le w\le\overline{u}$ in $\Omega$}\big\}\]
has a smallest element $w_k\in\w$. By Propositions \ref{reg} and \ref{smp} we have $w_k\in{\rm int}(\cs_+)$. The sequence $(w_k)$ is relatively compact in $\w$. Indeed, for all $k\in\N$ we have $w_k\in\mathcal{S}(0,\overline{u})$, and the latter is a compact set in $\w$ (Proposition \ref{sss} again). Thus, passing to a subsequence we have $w_k\to w_+$ in $\w$, $w_k\to w_+$ in $L^p(\Omega)$, and $w_k(x)\to w_+(x)$ for a.e.\ $x\in\Omega$ (in particular, $0\le w_+\le\delta_1$ in $\Omega$). We claim that
\beq\label{ext5}
w_+ \neq 0.
\eeq
We argue by contradiction, assuming that $w_k\to 0$ in $\w$. Again we consider an auxiliary problem:
\beq\label{ext6}
\begin{cases}
\fpl\hat v = \lambda(\hat v^+)^{q-1} & \text{in $\Omega$} \\
\hat v = 0 & \text{in $\Omega^c$,}
\end{cases}
\eeq
with $\lambda\in(0,\lambda_*)$ as above. Since $q<p$, by direct variational methods and Proposition \ref{smp} we see that \eqref{ext6} has a solution $\hat v\in{\rm int}(\cs_+)$. By \cite[Remark 3.6]{FI}, passing to a subsequence we also have $w_k\to 0$ in $\cs$, in particular $w_k\to 0$ uniformly in $\overline\Omega$. So, let $k\in\N$ be large enough s.t.\ $0<w_k<\delta_1$ in $\Omega$. By ${\bf H}_2$ \ref{h26} we have weakly in $\Omega$
\[\fpl w_k = \lambda w_k^{q-1}+g(x,w_k) \ge \lambda w_k^{q-1},\]
i.e., $w_k\in{\rm int}(\cs_+)$ is a supersolution of \eqref{ext6}. Clearly, the mapping
\[t\mapsto\frac{\lambda}{t^{p-q}}\]
is decreasing in $(0,\infty)$, so by Theorem \ref{com} we have $\hat v\le w_k$ in $\Omega$. Letting $k\to\infty$ we get $\hat v\le 0$ in $\Omega$, a contradiction. Thus, \eqref{ext5} is proved.
\vskip2pt
\noindent
By strong convergence and \eqref{ext5}, we see that $w_+\in\w_+\setminus\{0\}$ solves \eqref{dir}, hence as above we deduce $w_+\in{\rm int}(\cs_+)$. Besides, from $w_+\le\overline{u}$ we deduce that in $\Omega$
\[0 < w_+ \le \delta_1.\]
We prove now that $w_+$ is the smallest positive solution of \eqref{dir}. Let $u\in\w_+\setminus\{0\}$ be another positive solution of \eqref{dir}, then $u\in{\rm int}(\cs_+)$. So we can find $k\in\N$ s.t.\ in $\Omega$ we have
\[\underline{u}_k = \frac{\hat u_1}{k} \le u.\]
Set
\[\hat u = \min\{u,\overline{u}\}.\]
By \cite[Lemma 3.1]{FI}, $\hat u\in\w$ is a supersolution of \eqref{dir}, so $(\underline{u}_k,\hat u)$ is a sub-supersolution pair. By Proposition \ref{sss} there exists a solution
\[v\in\mathcal{S}(\underline{u}_k,\hat u) \subseteq \mathcal{S}(\underline{u}_k,\overline{u}).\]
In particular, in $\Omega$ we have
\[w_k \le v \le \hat u \le u.\]
Letting $k\to\infty$, we have $w_+\le u$ in $\Omega$.
\vskip2pt
\noindent
The existence \ref{ext2} of a biggest negative solution $w_-\in -{\rm int}(\cs_+)$ s.t.\ $\|w_-\|_\infty\le\delta_1$ is proved in a similar way.
\end{proof}

\begin{remark}\label{rex}
For alternative hypotheses to ${\bf H}_2$ see \cite{FI} where (as already mentioned) extremal constant sign solutions are detected for $(p-1)$-linear reactions at $0$. Also, in \cite{MMP1} (dealing with the local case $s=1$) a different set of assumptions is proposed to find a biggest negative solution, namely, a $(p-1)$-linear behavior of $g(x,\cdot)$ near $0$ with a global sign condition.
\end{remark}

\noindent
In what follows, we seek a fourth nontrivial solution of \eqref{dir} under hypotheses ${\bf H}_2$, for $\lambda>0$ small enough. Set
\beq\label{tla}
\tilde\lambda = \min\{\lambda^*,\lambda_*\} > 0,
\eeq
with $\lambda^*>0$ as in Theorem \ref{css} and $\lambda_*>0$ as in Lemma \ref{ext}. Without loss of generality we may assume that for all $\lambda\in(0,\tilde\lambda)$ that $u_\pm\in\pm{\rm int}(\cs_+)$ are the extremal constant sign solutions given by Lemma \ref{ext}, in particular $v_+\ge u_+$ in $\Omega$. Set for all $(x,t)\in\Omega\times\R$
\[\kappa(x,t) = \begin{cases}
u_-(x) & \text{if $t\le u_-(x)$} \\
t & \text{if $u_-(x)<t<u_+(x)$} \\
u_+(x) & \text{if $t\ge u_+(x)$.}
\end{cases}\]
Accordingly, for all $\lambda>0$ set
\[\tilde{f}_\lambda(x,t) = \lambda|\kappa(x,t)|^{q-2}\kappa(x,t)+g(x,\kappa(x,t)),\]
\[\tilde{F}_\lambda(x,t) = \int_0^t\tilde{f}_\lambda(x,\tau)\,d\tau.\]
Further, set for all $u\in\w$
\[\tilde{\Phi}_\lambda(u) = \frac{\|u\|^p}{p}-\int_\Omega\tilde{F}_\lambda(x,u)\,dx.\]

\begin{lemma}\label{til}
Let ${\bf H}_2$ hold. Then,
\begin{enumroman}
\item\label{til1} $\tilde\Phi_\lambda\in C^1(\w)$ is coercive and satisfies the Palais-Smale $(PS)$-condition;
\item\label{til2} if $u\in K(\tilde\Phi_\lambda)$, then $u_-\le u\le u_+$ in $\Omega$ and $u\in\cs$ solves \eqref{dir}.
\end{enumroman}
\end{lemma}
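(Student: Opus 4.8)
\textbf{Proof plan for Lemma \ref{til}.}

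The plan is to treat part \ref{til1} and part \ref{til2} in the order the statement lists them, since coercivity will feed directly into the $(PS)$ verification. First I would establish that $\tilde f_\lambda$ is a Carath\'eodory function satisfying ${\bf H}_0$: indeed, since $u_\pm\in\cs\subset L^\infty(\Omega)$, the truncation $\kappa(x,\cdot)$ takes values in a fixed bounded interval $[\,\min_\Omega u_-,\,\max_\Omega u_+\,]$, so $\tilde f_\lambda(x,t)$ is in fact \emph{bounded} uniformly in $(x,t)$ by ${\bf H}_2$ \ref{h21} and $q<p$; in particular ${\bf H}_0$ holds (trivially, with any exponent $r$). Consequently $\tilde\Phi_\lambda\in C^1(\w)$ with $\langle\tilde\Phi_\lambda'(u),\varphi\rangle=\langle\fpl u,\varphi\rangle-\int_\Omega\tilde f_\lambda(x,u)\varphi\,dx$, exactly as in Section \ref{sec2}. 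Coercivity is then immediate: $|\tilde F_\lambda(x,t)|\le C(1+|t|)$ for a constant $C$ depending only on the data and on $\|u_\pm\|_\infty$, hence
\[
\tilde\Phi_\lambda(u)\ge\frac{\|u\|^p}{p}-C\|u\|_1-C|\Omega|\ge\frac{\|u\|^p}{p}-C\|u\|-C,
\]
which tends to $\infty$ as $\|u\|\to\infty$ because $p>1$. For $(PS)$: any $(PS)$-sequence is bounded by coercivity, so up to a subsequence $u_n\rightharpoonup u$ in $\w$ and $u_n\to u$ in $L^p(\Omega)$; testing $\tilde\Phi_\lambda'(u_n)\to 0$ with $\varphi=u_n-u$, the reaction term vanishes in the limit (the integrand $\tilde f_\lambda(x,u_n)(u_n-u)$ is dominated using the $L^\infty$ bound on $\tilde f_\lambda$ together with $u_n\to u$ in $L^1$), leaving $\langle\fpl u_n,u_n-u\rangle\to 0$; then the $(S)_+$-property of $\fpl$ gives $u_n\to u$ in $\w$. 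This proves \ref{til1}.

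For part \ref{til2}, the heart is showing that critical points of the truncated functional stay in the order interval $[u_-,u_+]$, a standard comparison-with-truncation argument. Let $u\in K(\tilde\Phi_\lambda)$. To prove $u\le u_+$, I would test $\tilde\Phi_\lambda'(u)=0$ with $\varphi=(u-u_+)^+\in\w_+$; on the set $\{u>u_+\}$ one has $\kappa(x,u)=u_+$, so $\tilde f_\lambda(x,u)=\lambda u_+^{q-1}+g(x,u_+)$, which is precisely $f_\lambda(x,u_+)$, and since $u_+$ solves \eqref{dir} we obtain
\[
\langle\fpl u,(u-u_+)^+\rangle=\int_\Omega f_\lambda(x,u_+)(u-u_+)^+\,dx=\langle\fpl u_+,(u-u_+)^+\rangle.
\]
Hence $\langle\fpl u-\fpl u_+,(u-u_+)^+\rangle=0$; using the elementary monotonicity inequality for $\fpl$ (in the spirit of Lemma \ref{pn} \ref{pn2}, applied to $u-u_+$), this forces $(u-u_+)^+=0$ in $\w$, i.e.\ $u\le u_+$ in $\Omega$. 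Symmetrically, testing with $-(u-u_-)^-\in\w_+$ and using that $u_-$ solves \eqref{dir} yields $u\ge u_-$ in $\Omega$. Once $u_-\le u\le u_+$ is known, $\kappa(x,u)=u$, so $\tilde f_\lambda(x,u)=f_\lambda(x,u)$ and $u$ is a weak solution of \eqref{dir}; finally $u\in\cs$ follows from Proposition \ref{reg} (with $f_\lambda$ in place of $f$, which satisfies ${\bf H}_0$ as noted in Section \ref{sec3}).

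The main obstacle, such as it is, lies in making the comparison step in \ref{til2} fully rigorous: one must be careful that $(u-u_+)^+$ is an admissible test function (it is, being the positive part of a $\w$-function, hence in $\w_+$), and that the nonlocal operator $\fpl$ is genuinely \emph{strictly} monotone in the right sense so that vanishing of $\langle\fpl u-\fpl u_+,(u-u_+)^+\rangle$ really gives $u\le u_+$ rather than merely $u$ and $u_+$ being ``parallel.'' This is handled by the pointwise inequality $j_p(a-b)(a^+-b^+)\ge|a^+-b^+|^p$ from \cite[Lemma A.2]{BP} already invoked in the proof of Lemma \ref{pn}: applying it to $a=u(x)-u_+(x)$, $b=u(y)-u_+(y)$ after adding and subtracting shows the bracket dominates $\|(u-u_+)^+\|^p$, which must then be zero. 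Everything else is routine bookkeeping with the $L^\infty$ bounds on the truncation.
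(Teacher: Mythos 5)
Your proposal follows the paper's proof essentially step for step: boundedness of $\kappa$ and hence of $\tilde f_\lambda$ gives ${\bf H}_0$, the linear-growth bound on $\tilde F_\lambda$ gives coercivity, $(PS)$ follows from boundedness of the sequence plus the $(S)_+$-property, and the order-interval localization is obtained by testing with $(u-u_+)^+$ and $-(u-u_-)^-$ exactly as in the paper. One caveat on the comparison step: the pointwise inequality $j_p(a-b)(a^+-b^+)\ge|a^+-b^+|^p$ applied with $a=u(x)-u_+(x)$, $b=u(y)-u_+(y)$ bounds $\langle\fpl(u-u_+),(u-u_+)^+\rangle$ from below, whereas the quantity that vanishes is $\langle\fpl u-\fpl u_+,(u-u_+)^+\rangle$; since $\fpl$ is nonlinear these are not the same, so you need the genuinely different pointwise estimate $\bigl(j_p(u(x)-u(y))-j_p(u_+(x)-u_+(y))\bigr)\bigl(w^+(x)-w^+(y)\bigr)\ge C|w^+(x)-w^+(y)|^p$ with $w=u-u_+$ (valid for $p\ge2$ after a case analysis on the signs of $w(x),w(y)$). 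The paper disposes of exactly this point by invoking \cite[Lemma 3.2]{FI}, so the gap is one of citation rather than of substance.
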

\begin{proof}
We prove \ref{til1}. By ${\bf H}_2$ \ref{h21} we see that $\tilde{f}_\lambda:\Omega\times\R\to\R$ satisfies ${\bf H}_0$, so $\tilde\Phi_\lambda\in C^1(\w)$ with derivative given for all $u,\varphi\in\w$ by
\beq\label{til3}
\langle\tilde\Phi_\lambda'(u),\varphi\rangle = \langle\fpl u,\varphi\rangle-\int_\Omega\tilde{f}_\lambda(x,u)\varphi\,dx.
\eeq
It is easily seen that $\tilde\Phi_\lambda$ is coercive in $\w$. Indeed, since $u_\pm\in\cs$, the mapping $\kappa$ is bounded in $\Omega\times\R$, hence by ${\bf H}_2$ \ref{h21} $\tilde{f}_\lambda$ is bounded as well. So, there exists $C>0$ s.t.\ for a.e.\ $x\in\Omega$ and all $t\in\R$
\[\tilde{F}_\lambda(x,t) \le C|t|.\]
So, for all $u\in\w$ we have
\begin{align*}
\tilde\Phi_\lambda(u) &\ge \frac{\|u\|^p}{p}-\int_\Omega C|u|\,dx \\
&\ge \frac{\|u\|^p}{p}-C\|u\|,
\end{align*}
and the latter tends to $\infty$ as $\|u\|\to\infty$.
\vskip2pt
\noindent
Next we prove that $\tilde\Phi_\lambda$ satisfies $(PS)$. Let $(u_n)$ be a sequence in $\w$ s.t.\ $|\tilde\Phi_\lambda(u_n)|\le C$ for all $n\in\N$, and $\tilde\Phi_\lambda'(u_n)\to 0$ in $W^{-s,p'}(\Omega)$. By coercivity, $(u_n)$ is bounded in $\w$. Passing to a subsequence, we have $u_n\rightharpoonup u$ in $\w$, $u_n\to u$ in $L^1(\Omega)$. By \eqref{til3} we have for all $n\in\N$ and $\varphi\in\w$
\beq\label{til4}
\langle\fpl u_n,\varphi\rangle = \int_\Omega\tilde{f}_\lambda(x,u_n)\varphi\,dx+{\bf o}(1).
\eeq
Testing \eqref{til4} with $\varphi=u_n-u\in\w$ we have
\begin{align*}
\langle\fpl u_n,u_n-u\rangle &= \int_\Omega\tilde{f}_\lambda(x,u_n)(u_n-u)\,dx+{\bf o}(1) \\
&\le C\|u_n-u\|_1+{\bf o}(1),
\end{align*}
and the latter tends to $0$ as $n\to\infty$. By the $(S)_+$-property of $\fpl$, we deduce that $u_n\to u$ in $\w$, so $\tilde\Phi_\lambda$ satisfies $(PS)$.
\vskip2pt
\noindent
Now we prove \ref{til2}. Let $u\in K(\tilde\Phi_\lambda)$. First we see that $u\le u_+$ in $\Omega$. Testing \eqref{til4} with $(u-u_+)^+\in\w$, and recalling the definition of $\tilde{f}_\lambda$, we have
\begin{align*}
\langle\fpl u,(u-u_+)^+\rangle &= \int_\Omega\tilde{f}_\lambda(x,u)(u-u_+)^+\,dx \\
&= \int_\Omega\big[\lambda(u_+)^{q-1}+g(x,u_+)\big](u-u_+)^+\,dx \\
&= \langle\fpl u_+,(u-u_+)^+\rangle.
\end{align*}
Arguing as in the proof of \cite[Lemma 3.2]{FI}, we see that
\[\|(u-u_+)^+\|^p \le C\langle\fpl u-\fpl u_+,(u-u_+)^+\rangle = 0,\]
hence $(u-u_+)^+=0$. Similarly, we prove that $u\ge u_-$ in $\Omega$. Again by the definition of $\tilde{f}_\lambda$, we see that weakly in $\Omega$
\[\fpl u = f_\lambda(x,u),\]
i.e., $u$ is a solution of \eqref{dir}. By Proposition \ref{reg}, we have $u\in\cs$.
\end{proof}

\noindent
By ${\bf H}_2$ \ref{h24}, it is easily seen that $0\in K(\tilde\Phi_\lambda)$. Without loss of generality, we may assume that $0$ is an isolated critical point, i.e., that there exists a neighborhood $U$ of $0$ s.t.\
\[K(\tilde\Phi_\lambda)\cap U=\{0\}.\]
Thus, we may compute the critical groups of $\tilde\Phi_\lambda$ at $0$ (see \cite[Definition 6.43]{MMP}):

\begin{lemma}\label{cgz}
Let ${\bf H}_2$ hold. Then, for all $\lambda>0$, $k\in\N$
\[C_k(\tilde\Phi_\lambda,0) = 0.\]
\end{lemma}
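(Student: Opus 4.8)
The plan is to show that $0$ sits at the top of the critical set of $\tilde\Phi_\lambda$ — every other critical point having strictly negative energy — and that the associated sublevel set is contractible, which together force all critical groups at $0$ to vanish. Since by assumption $0$ is isolated in $K(\tilde\Phi_\lambda)$, fix $\rho>0$ with $K(\tilde\Phi_\lambda)\cap\overline{B}_\rho(0)=\{0\}$; writing $\tilde\Phi_\lambda^0=\{u\in\w:\tilde\Phi_\lambda(u)\le 0\}$, one then has
\[
C_k(\tilde\Phi_\lambda,0)=H_k\big(\tilde\Phi_\lambda^0\cap\overline{B}_\rho(0),\ \tilde\Phi_\lambda^0\cap\overline{B}_\rho(0)\setminus\{0\}\big)\qquad(k\in\N).
\]

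The crucial ingredient is the pointwise concavity estimate
\[
\tilde f_\lambda(x,t)\,t<p\,\tilde F_\lambda(x,t)\qquad\text{for a.e.\ }x\in\Omega\ \text{and all }t\in\R\setminus\{0\}.
\]
To obtain it, note first that ${\bf H}_2$ \ref{h26} remains valid if $\delta_1$ is replaced by any smaller number, and that by Lemma \ref{ext} (shrinking $\tilde\lambda$ if necessary) $\|u_\pm\|_\infty$ may be assumed as small as desired; hence, by ${\bf H}_2$ \ref{h24}, we may fix $\eps_0>0$ with $|g(x,s)|\le\eps_0|s|^{p-1}$ for $|s|\le\delta_1$ and $2\eps_0\delta_1^{\,p-q}<\lambda\frac{p-q}{q}$. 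Now argue by cases on the position of $t$ relative to $u_\pm(x)$. If $u_-(x)<t<u_+(x)$, then $\kappa(x,t)=t$, so
\[
p\,\tilde F_\lambda(x,t)-\tilde f_\lambda(x,t)\,t=\lambda\frac{p-q}{q}|t|^q+\big(pG(x,t)-g(x,t)\,t\big),
\]
and since $|pG(x,t)-g(x,t)t|\le 2\eps_0|t|^p\le 2\eps_0\delta_1^{\,p-q}|t|^q$, the right-hand side is positive. If $t\ge u_+(x)$, then $\tilde f_\lambda(x,t)=\lambda u_+(x)^{q-1}+g(x,u_+(x))$ is constant in $t$ and $\tilde F_\lambda(x,t)=F_\lambda(x,u_+(x))+\tilde f_\lambda(x,t)(t-u_+(x))$; using $p-1>0$ and $t\ge u_+(x)$ together with $g(x,u_+(x))u_+(x)\ge 0$ and $G(x,u_+(x))\ge 0$ (both from ${\bf H}_2$ \ref{h26}, as $0<u_+\le\delta_1$), one computes $p\tilde F_\lambda(x,t)-\tilde f_\lambda(x,t)t\ge\big(\lambda\frac{p-q}{q}-2\eps_0\delta_1^{\,p-q}\big)u_+(x)^q>0$. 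The case $t\le u_-(x)$ is symmetric.

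Integrating the estimate over $\Omega$ gives $\tilde\Phi_\lambda(u)<\frac1p\langle\tilde\Phi_\lambda'(u),u\rangle$ for every $u\in\w\setminus\{0\}$. Consequently every $u\in K(\tilde\Phi_\lambda)\setminus\{0\}$ has $\tilde\Phi_\lambda(u)<0$, and if $u\neq 0$ satisfies $\tilde\Phi_\lambda(u)\ge 0$ then $\langle\tilde\Phi_\lambda'(u),u\rangle>0$. From the latter, for each $u\neq 0$ the function $t\mapsto\tilde\Phi_\lambda(tu)$ is strictly increasing at every $t>0$ where it is nonnegative, so $\{t\ge 0:\tilde\Phi_\lambda(tu)\le 0\}$ is a closed interval $[0,T(u)]$ with $T(u)<\infty$ (coercivity of $\tilde\Phi_\lambda$, Lemma \ref{til} \ref{til1}) and $T(u)>0$ (as $t\to 0^+$, a.e.\ $x$ eventually satisfies $u_-(x)<tu(x)<u_+(x)$, so that $\tilde\Phi_\lambda(tu)\le\frac{t^p}{p}\|u\|^p-\frac{\lambda}{q}t^q\|u\|_q^q+{\bf o}(t^q)<0$ for small $t>0$, using also the subcritical growth ${\bf H}_2$ \ref{h21}). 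Hence $\tilde\Phi_\lambda^0$, and so $\tilde\Phi_\lambda^0\cap\overline{B}_\rho(0)$, is star-shaped with respect to $0$, therefore contractible; and the radial deformation pushing each $u$ outward to $\min\{T(u),\rho/\|u\|\}\,u$ retracts $\tilde\Phi_\lambda^0\cap\overline{B}_\rho(0)\setminus\{0\}$ onto a set homeomorphic to the unit sphere of $\w$, which is contractible because $\w$ is infinite dimensional. Both members of the pair above being contractible, the long exact homology sequence of the pair yields $C_k(\tilde\Phi_\lambda,0)=0$ for all $k\in\N$.

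I expect the delicate point to be the pointwise estimate: it has to be checked separately on the unsaturated region $u_-(x)<t<u_+(x)$ and on the two saturated regions cut out by $u_\pm$, and the smallness imposed on $\delta_1$ and on $\|u_\pm\|_\infty$ must be made compatible with the hypotheses of Lemma \ref{ext} — which is why one may have to shrink $\tilde\lambda$. Once the estimate is available, the remainder is the standard Morse-theoretic topology of star-shaped sublevel sets.
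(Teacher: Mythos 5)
Your overall strategy --- show that the sublevel set $\{\tilde\Phi_\lambda\le 0\}$ near $0$ is star-shaped, that its punctured version is contractible, and conclude from the long exact sequence of the pair --- is the same as the paper's. The gap is in the analytic input. You take $\mu=p$ and claim the pointwise inequality $p\tilde F_\lambda(x,t)>\tilde f_\lambda(x,t)t$ for \emph{all} $t\neq 0$, which forces you to control $g$ on the whole range $u_-(x)\le t\le u_+(x)$. Your choice ``fix $\eps_0>0$ with $|g(x,s)|\le\eps_0|s|^{p-1}$ for $|s|\le\delta_1$ and $2\eps_0\delta_1^{p-q}<\lambda\tfrac{p-q}{q}$'' is not available in the relevant regime: the first requirement forces $\eps_0\ge A:=\mathrm{ess\,sup}_{x}\sup_{0<|s|\le\delta_1}|g(x,s)|/|s|^{p-1}$, a constant independent of $\lambda$ (finite by ${\bf H}_2$ \ref{h21}, \ref{h24}), and then the second reads $\lambda>\tfrac{2q}{p-q}A\delta_1^{p-q}$, a \emph{lower} bound on $\lambda$ --- incompatible with $\lambda\to 0^+$, which is where the whole theorem lives. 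Shrinking $\delta_1$ does not obviously resolve this, because you then need $\|u_\pm\|_\infty\le\delta$ for the new $\delta$, which via Lemma \ref{ext} requires $\lambda<\lambda_*(\delta)$, while $\delta$ was itself chosen depending on $\lambda$: the dependence is circular as written. It can in fact be closed, but only by extracting from the proof of Lemma \ref{ext} a quantitative bound of the type $\|u_\pm\|_\infty\le C\lambda^{1/(p-q)}$, so that $\eps_0\|u_\pm\|_\infty^{p-q}\lesssim\eps_0\lambda$ is beaten by $\lambda(p-q)/q$ for one fixed small $\eps_0$; you neither state nor prove such a bound, and Lemma \ref{ext} as stated only gives $\|u_\pm\|_\infty\le\delta_1$. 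Note also that the inequality can genuinely fail for $|t|$ comparable to $\delta_1$ when $\lambda$ is small, since ${\bf H}_2$ gives no control of the sign of $pG(x,t)-g(x,t)t$ there; so this is not a removable technicality of your write-up but a real obstruction to the global $\mu=p$ estimate.

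The paper sidesteps all of this by fixing $\mu\in(q,p)$ strictly. The inequality $\mu\tilde F_\lambda(x,t)-\tilde f_\lambda(x,t)t>0$ is then needed only for $|t|\le\delta$ with $\delta$ small, where it follows from the concave term $(\tfrac{\mu}{q}-1)\lambda|\kappa(x,t)|^q$ dominating $|g(x,\kappa(x,t))\kappa(x,t)|=O(|\kappa(x,t)|^p)$, with no constraint tying $\delta$ or $\eps_0$ to $\lambda$; away from $0$ one settles for the crude bound $\mu\tilde F_\lambda-\tilde f_\lambda t\ge -C|t|^r$. The resulting derivative estimate at zero-energy points,
\[
\frac{d}{d\tau}\restr{\tilde\Phi_\lambda(\tau v)}{\tau=1}\ \ge\ \Big(1-\frac{\mu}{p}\Big)\|v\|^p-C\|v\|^r,
\]
is positive for $\|v\|$ small because $\mu<p<r$, and this yields star-shapedness of $D=\{\tilde\Phi_\lambda\le 0\}\cap\overline{B}_\rho(0)$ --- which is all that is needed, rather than star-shapedness of the global sublevel set that you claim. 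The topological half of your argument (contractibility of $D$ and of $D_0=D\setminus\{0\}$ via a retraction exploiting the infinite dimensionality of $\w$) is sound and matches the paper; I recommend reworking only the estimate, replacing $p$ by some $\mu\in(q,p)$ and localizing it near $t=0$.
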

\begin{proof}
Preliminarily we establish some precise estimates on $\tilde{F}_\lambda$. First, by ${\bf H}_2$ \ref{h21} \ref{h24}, for all $\eps>0$ we can find $C_\eps>0$ s.t.\ for a.e.\ $x\in\Omega$ and all $t\in\R$
\[G(x,t) \ge -\eps|t|^p-C_\eps|t|^r.\]
So, for all $u_-(x)\le t\le u_+(x)$ we have
\[G(x,t) \ge -\eps|t|^p-C_\eps\max\{\|u_+\|_\infty,\,\|u_-\|_\infty\}^{r-p}|t|^p \ge -C|t|^p.\]
Then, for any $\lambda>0$ we get
\beq\label{cgz1}
\tilde{F}_\lambda(x,t) \ge \frac{\lambda}{q}|t|^q-C|t|^p.
\eeq
Now, fix $\mu\in(q,p)$. By ${\bf H}_2$ \ref{h21} we have
\begin{align*}
\mu\tilde{F}_\lambda(x,t)-\tilde{f}_\lambda(x,t)t &\ge -\mu\Big[\frac{\lambda}{q}|t|^q+C(|t|+|t|^r)\Big]-\big[\lambda|t|^{q-1}+C(1+|t|^{r-1})\big]|t| \\
&\ge -C_\mu(1+|t|^r),
\end{align*}
with $C_\mu>0$ depending on $\mu$. The latter inequality implies
\beq\label{cgz2}
\liminf_{t\to\infty}\frac{\mu\tilde{F}_\lambda(x,t)-\tilde{f}_\lambda(x,t)t}{t^r} > -\infty,
\eeq
uniformly for a.e.\ $x\in\Omega$. Besides, by ${\bf H}_2$ \ref{h24} \ref{h26} we can find $\delta\in(0,\delta_1]$ s.t.\ for a.e.\ $x\in\Omega$ and all $|t|\le\delta$ we have both
\[|g(x,t)| \le |t|^{p-1}, \ G(x,t) \ge 0.\]
We claim that, by taking $\delta>0$ even smaller if necessary, for a.e.\ $x\in\Omega$ and all $0<|t|\le\delta$ we have
\beq\label{cgz3}
\mu\tilde{F}_\lambda(x,t)-\tilde{f}_\lambda(x,t)t > 0.
\eeq
Indeed, pick $x\in\Omega$, $t\in(0,\delta]$ and distinguish two cases:
\begin{itemize}[leftmargin=1cm]
\item[$(a)$] if $t>u_+(x)$, then
\begin{align*}
\tilde{F}_\lambda(x,t) &= \int_0^{u_+(x)}\big[\lambda\tau^{q-1}+g(x,\tau)\big]\,d\tau+\int_{u_+(x)}^t\big[\lambda u_+(x)^{q-1}+f(x,u_+(x))\big]\,d\tau \\
&= \frac{\lambda}{q}u_+(x)^q+G(x,u_+(x))+\big[\lambda u_+(x)^{q-1}+g(x,u_+(x))\big](t-u_+(x));
\end{align*}
\item[$(b)$] if $0<t\le u_+(x)$, then simply
\[\tilde{F}_\lambda(x,t) = \frac{\lambda}{q}t^q+G(x,t).\]
\end{itemize}
In any case, we have
\begin{align*}
\mu\tilde{F}_\lambda(x,t)-\tilde{f}_\lambda(x,t)t &= \mu\Big[\frac{\lambda}{q}\kappa(x,t)^q+G(x,\kappa(x,t))\Big]+\big[\lambda\kappa(x,t)^{q-1}+g(x,\kappa(x,t))\big]\big[(t-u_+(x))^+-t\big] \\
&\ge \frac{\mu\lambda}{q}\kappa(x,t)^q-\big[\lambda\kappa(x,t)^{q-1}+g(x,\kappa(x,t))\big]\kappa(x,t) \\
&= \Big(\frac{\mu}{q}-1\Big)\lambda\kappa(x,t)^q-g(x,\kappa(x,t))\kappa(x,t) \\
&\ge C_1\kappa(x,t)^q-C_2\kappa(x,t),
\end{align*}
with $C_1,C_2>0$ (recall that $\mu>q$). Here we have used the equality
\[(t-u_+(x))^+-t = -\kappa(x,t),\]
holding for all $t>0$, along with $G(x,t)\ge 0$ and the relations in $(a)$, $(b)$. Since $p>q$ and $\kappa(x,t)\le t$, for all $t>0$ small enough we deduce
\[\mu\tilde{F}_\lambda(x,t)-\tilde{f}_\lambda(x,t)t > 0.\]
Similarly, we deal with $t\in[-\delta,0)$, thus proving \eqref{cgz3}. Combining \eqref{cgz2} and \eqref{cgz3}, we find $C>0$ s.t.\ for a.e.\ $x\in\Omega$ and all $t\in\R$
\beq\label{cgz4}
\mu\tilde{F}_\lambda(x,t)-\tilde{f}_\lambda(x,t)t > -C|t|^r.
\eeq
Armed with the estimates above, we can describe the behavior of $\tilde\Phi_\lambda$ near $0$. First, fix $\rho>0$ s.t.\
\[K(\tilde\Phi_\lambda)\cap\overline{B}_\rho(0) = \{0\}.\]
For any $v\in\w\setminus\{0\}$ s.t.\ $\tilde\Phi_\lambda(v)=0$, the mapping $\tau\mapsto\tilde\Phi_\lambda(\tau v)$ is $C^1$ in $(0,\infty)$ and, by the chain rule, we have
\begin{align*}
\frac{d}{d\tau}\restr{\tilde\Phi_\lambda(\tau v)}{\tau=1} &= \langle\tilde\Phi_\lambda'(v),v\rangle-\mu\tilde\Phi_\lambda(v) \\
&= \Big(1-\frac{\mu}{p}\Big)\|v\|^p+\int_\Omega\big[\mu\tilde{F}_\lambda(x,v)-\tilde{f}_\lambda(x,v)v\big]\,dx \\
&\ge \Big(1-\frac{\mu}{p}\Big)\|v\|^p-C\|v\|^r,
\end{align*}
where  we have used \eqref{cgz4}. Since $\mu<p<r$, the latter is positive whenever $\|v\|>0$ is small enough. So, taking $\rho>0$ even smaller if necessary, for all $v\in\overline{B}_\rho(0)\setminus\{0\}$ s.t.\ $\tilde\Phi_\lambda(v)=0$ we have
\beq\label{cgz5}
\frac{d}{d\tau}\restr{\tilde\Phi_\lambda(\tau v)}{\tau=1} > 0.
\eeq
Now consider $u\in\overline{B}_\rho(0)\cap\cs\setminus\{0\}$. Since $u_\pm\in\pm{\rm int}(\cs_+)$, for all $\tau>0$ small enough we have $u_\pm-\tau u\in\pm{\rm int}(\cs_+)$, in particular $u_-<\tau u<u_+$ in $\Omega$. So, by \eqref{cgz1} we have
\begin{align*}
\tilde\Phi_\lambda(\tau u) &\le \frac{\tau^p}{p}\|u\|^p-\int_\Omega\Big[\frac{\lambda}{q}|\tau u|^q-C|\tau u|^p\Big]\,dx \\
&= \Big[\frac{\|u\|^p}{p}+C\|u\|_p^p\Big]\tau^p-\frac{\lambda}{q}\|u\|_q^q\tau^q,
\end{align*}
and the latter is negative for all $\tau>0$ small enough (depending on $u$). The same holds for all $u\in\overline{B}_\rho(0)\setminus\{0\}$ by density (see \cite[Theorem 6]{FSV}), so we may set
\beq\label{cgz6}
\tau^*(u) = \inf\big\{\tau>0: \ \tilde\Phi_\lambda(\tau u)>0\big\} > 0.
\eeq
Define the closed set
\[D = \big\{u\in\overline{B}_\rho(0): \ \tilde\Phi_\lambda(u)\le 0\big\},\]
which is nonempty due to \eqref{cgz6}. We claim that $D$ is contractible (see \cite[Definition 6.22]{MMP}). First we prove that, for all $u\in D\setminus\{0\}$ and all $\tau\in[0,1]$, we have $\tau u\in D$. Arguing by contradiction, let $u\in D$, $\tau_0\in(0,1)$ s.t.\
\[\tilde\Phi_\lambda(\tau_0 u) > 0.\]
Since $\tilde\Phi_\lambda(u)\le 0$, by the mean value theorem we can find $\tau_1\in(\tau_0,1]$ s.t.\ $\tilde\Phi_\lambda(\tau_1 u)=0$. Set
\[\tau_2 = \min\big\{\tau\in(\tau_0,1]: \ \tilde\Phi_\lambda(\tau u)=0\big\}.\]
Then $\tau_2>\tau_0$ and $\tilde\Phi_\lambda(\tau u)>0$ for all $\tau\in[\tau_0,\tau_2)$, which by monotonicity implies
\[\frac{d}{d\tau}\restr{\tilde\Phi_\lambda(\tau u)}{\tau=\tau_2} \le 0.\]
Besides, by \eqref{cgz5} with $v=\tau_2u\in\overline{B}_\rho(0)\setminus\{0\}$ and the chain rule we have
\[\frac{d}{d\tau}\restr{\tilde\Phi_\lambda(\tau u)}{\tau=\tau_2} = \frac{1}{\tau_2}\frac{d}{d\tau}\restr{\tilde\Phi_\lambda(\tau v)}{\tau=1} >0,\]
a contradiction. So $D$ is star-shaped, hence contractible by \cite[Remark 6.23]{MMP}. Now set
\[D_0 = \big\{u\in\overline{B}_\rho(0)\setminus\{0\}:\,\tilde\Phi_\lambda(u)\le 0\big\}, \ E_0 = \big\{u\in\overline{B}_\rho(0)\setminus\{0\}:\,\tilde\Phi_\lambda(u)> 0\big\},\]
so that $D_0\cup E_0=\overline{B}_\rho(0)\setminus\{0\}$. We prove now that $D_0$ is contractible. Indeed, for all $u\in E_0$, by \eqref{cgz6} there exists $\tau(u)\in(0,1)$ s.t.\
\[\tilde\Phi_\lambda(\tau(u)u) = 0.\]
By \eqref{cgz5} and the implicit function theorem, $\tau(u)\in(0,1)$ is unique and the map $\tau:E_0\to(0,1)$ is continuous. So, set for all $u\in\overline{B}_\rho(0)\setminus\{0\}$
\[j(u) = \begin{cases}
u & \text{if $u\in D_0$} \\
\tau(u)u & \text{if $u\in E_0.$}
\end{cases}\]
The map $j:(\overline{B}_\rho(0)\setminus\{0\})\to D_0$ is continuous. Indeed, avoiding trivial cases, let $(u_n)$ be a sequence in $E_0$ s.t.\ $u_n\to u$ in $\w$, for some $u\in D_0$. Then we have $\tilde\Phi_\lambda(u)=0$, hence by uniqueness $\tau(u_n)\to 1$, which in turn implies
\[\lim_n j(u_n) = u =j(u).\]
Recalling that $j(u)=u$ for all $u\in D_0$, we conclude that $j$ is a retraction of $\overline{B}_\rho(0)\setminus\{0\}$ onto $D_0$. Since $\w$ is infinite-dimensional, then $\overline{B}_\rho(0)\setminus\{0\}$ is contractible, hence $D_0$ is contractible as well. Finally, by the excision property of critical groups and \cite[Propositions 6.24, 6.25]{MMP}, we have for all $k\in\N$
\[C_k(\tilde\Phi_\lambda,0) = H_k(D,D_0) = H_k(D,\star) = 0,\]
which proves the assertion.
\end{proof}

\noindent
We can finally prove our multiplicity result:

\begin{theorem}\label{nod}
Let ${\bf H}_2$ hold. Then, there exists $\tilde\lambda>0$ s.t.\ for all $\lambda\in(0,\tilde\lambda)$ problem \eqref{dir} has at least four nontrivial solutions: $u_+,v_+\in{\rm int}(\cs_+)$, $u_-\in -{\rm int}(\cs_+)$, and $\tilde u\in\cs$ nodal.
\end{theorem}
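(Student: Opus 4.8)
\medskip\noindent
The plan is to push the extremal constant sign solutions $u_\pm$ of Lemma~\ref{ext} into the doubly truncated functional $\tilde\Phi_\lambda$ and extract a fourth critical point by a Morse-theoretic argument. Recall that, for every $\lambda\in(0,\tilde\lambda)$ with $\tilde\lambda$ as in~\eqref{tla}, Theorem~\ref{css} and Lemma~\ref{ext} already furnish two positive solutions $u_+,v_+\in{\rm int}(\cs_+)$ — $u_+$ being the smallest one, so $v_+\ge u_+$, $v_+\neq u_+$ — and a negative solution $u_-\in-{\rm int}(\cs_+)$, the biggest one. By Lemma~\ref{til}, $\tilde\Phi_\lambda\in C^1(\w)$ is coercive, satisfies $(PS)$, and each element of $K(\tilde\Phi_\lambda)$ lies in the order interval $[u_-,u_+]$ and solves~\eqref{dir}; by Lemma~\ref{cgz}, $C_k(\tilde\Phi_\lambda,0)=0$ for all $k\in\N$. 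It therefore suffices to produce $\tilde u\in K(\tilde\Phi_\lambda)$ with $\tilde u\neq u_\pm$ and $C_1(\tilde\Phi_\lambda,\tilde u)\neq0$: indeed such $\tilde u$ is nonzero by Lemma~\ref{cgz}, it solves~\eqref{dir} with $u_-\le\tilde u\le u_+$, it cannot be nonnegative (else it would be a positive solution dominated by the smallest one $u_+$, hence $\tilde u=u_+$, impossible) and, symmetrically, cannot be nonpositive, so $\tilde u$ is nodal; and $\tilde u\neq v_+$ since $v_+\in{\rm int}(\cs_+)$.

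The heart of the matter is to prove that $u_+$ and $u_-$ are \emph{local} minimizers of $\tilde\Phi_\lambda$. I would treat $u_+$ (the case of $u_-$ being symmetric) via the one-sided truncation $\tilde f^+_\lambda(x,t)=\tilde f_\lambda(x,t^+)$, with primitive $\tilde F^+_\lambda$ and functional $\tilde\Phi^+_\lambda(u)=\|u\|^p/p-\int_\Omega\tilde F^+_\lambda(x,u)\,dx$. As in Lemma~\ref{til} \ref{til1}, $\tilde\Phi^+_\lambda\in C^1(\w)$ is coercive and sequentially weakly l.s.c., hence attains a global minimum at some $\bar u\in\w$. Since $u_+,\hat u_1\in{\rm int}(\cs_+)$, for $\tau>0$ small we have $0<\tau\hat u_1\le\min\{\delta_1,u_+\}$ in $\Omega$, so ${\bf H}_2$ \ref{h26} gives $G(x,\tau\hat u_1)\ge0$ and thus
\[\tilde\Phi^+_\lambda(\tau\hat u_1)\le\frac{\lambda_1}{p}\tau^p-\frac{\lambda\tau^q}{q}\|\hat u_1\|_q^q<0\]
for $\tau$ small (recall $q<p$), whence $\bar u\neq0$. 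Testing $(\tilde\Phi^+_\lambda)'(\bar u)=0$ with $-\bar u^-$ gives $\bar u\ge0$ (as in Lemma~\ref{pos}), and testing with $(\bar u-u_+)^+$ as in the proof of Lemma~\ref{til} \ref{til2} gives $\bar u\le u_+$; hence $\bar u\in{\rm int}(\cs_+)$ is a positive solution of~\eqref{dir} with $\bar u\le u_+$, so $\bar u=u_+$ by extremality. Since $\tilde\Phi^+_\lambda$ and $\tilde\Phi_\lambda$ agree at every nonnegative function and a $\cs$-neighborhood of $u_+$ consists of functions positive in $\Omega$, the function $u_+$ is a local minimizer of $\tilde\Phi_\lambda$ in the $\cs$-topology, hence — by Proposition~\ref{svh}, valid since $\tilde f_\lambda$ satisfies ${\bf H}_0$ — also in the $\w$-topology.

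To conclude I would invoke the standard ``two local minima'' mechanism. We may assume $K(\tilde\Phi_\lambda)$ is finite, for otherwise Lemma~\ref{til} together with the extremality of $u_\pm$ shows that all but at most three elements of $K(\tilde\Phi_\lambda)$ are nodal solutions of~\eqref{dir}, and we are done. Then $u_+$ and $u_-$ are \emph{strict} local minimizers of the coercive (hence bounded below) functional $\tilde\Phi_\lambda$, and the mountain pass theorem (see~\cite{MMP}) yields $\tilde u\in K(\tilde\Phi_\lambda)$ with
\[\tilde\Phi_\lambda(\tilde u)=\inf_{\gamma\in\Gamma}\max_{t\in[0,1]}\tilde\Phi_\lambda(\gamma(t))>\max\{\tilde\Phi_\lambda(u_+),\tilde\Phi_\lambda(u_-)\},\]
where $\Gamma=\{\gamma\in C([0,1],\w):\gamma(0)=u_-,\ \gamma(1)=u_+\}$; in particular $\tilde u\neq u_\pm$, and since $\tilde u$ is a mountain pass point that is not a local minimizer, $C_1(\tilde\Phi_\lambda,\tilde u)\neq0$ by~\cite{H}. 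As $C_1(\tilde\Phi_\lambda,0)=0$ by Lemma~\ref{cgz}, the reduction of the first paragraph applies and yields a nodal solution $\tilde u\in\cs$ of~\eqref{dir}, distinct from $u_+,v_+,u_-$. This produces the four announced solutions for all $\lambda\in(0,\tilde\lambda)$.

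The expected main obstacle is precisely the local-minimizer step: the solutions $u_\pm$ are known a priori only as \emph{global} minimizers of the singly truncated functionals, and recognising them as \emph{local} minimizers of the doubly truncated $\tilde\Phi_\lambda$ rests on the auxiliary functional $\tilde\Phi^+_\lambda$, on the interplay of the local sign condition ${\bf H}_2$ \ref{h26} with the sublinearity $q<p$ (to exclude the trivial minimizer), on the extremality from Lemma~\ref{ext}, and — to upgrade minimality from the H\"older to the Sobolev topology — on the nonlinear nonlocal equivalence principle of Proposition~\ref{svh}. Once this is in place, the ``two local minima $\Rightarrow$ mountain pass'' argument and the vanishing of the critical groups of $\tilde\Phi_\lambda$ at $0$ make the rest routine.
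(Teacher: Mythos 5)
Your proposal is correct and follows essentially the same route as the paper: identify $u_\pm$ as local minimizers of $\tilde\Phi_\lambda$ via the one-sided truncations $\tilde\Phi^\pm_\lambda$, extremality, and Proposition \ref{svh}, then extract a fourth critical point $\tilde u$ with $C_1(\tilde\Phi_\lambda,\tilde u)\neq 0$ from the two-local-minima mountain pass mechanism (the paper cites the topological version in \cite[Theorem 6.99, Proposition 6.100]{MMP}, which also dispenses with checking strictness of the minima), and rule out $\tilde u=0$ and constant sign via Lemma \ref{cgz} and extremality. No gaps; your explicit justification for assuming $K(\tilde\Phi_\lambda)$ finite is a nice touch the paper leaves implicit.
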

\begin{proof}
Once again we remark that hypotheses ${\bf H}_2$ imply ${\bf H}_1$, so let $\lambda^*,\lambda_*>0$ be defined by Theorem \ref{css} and Lemma \ref{ext}, respectively, and $\tilde\lambda>0$ by \eqref{tla}. As above, we assume that $u_\pm\in\pm{\rm int}(\cs_+)$ are the extremal constant sign solutions of \eqref{dir} and $v_+\ge u_+$ in $\Omega$, and accordingly define $\tilde\Phi_\lambda\in C^1(\w)$. Finally, without loss of generality we assume that $K(\tilde\Phi_\lambda)$ is a finite set.
\vskip2pt
\noindent
First we prove that $u_+$ is a local minimizer of $\tilde\Phi_\lambda$. Indeed, set for all $(x,t)\in\Omega\times\R$
\[\tilde{f}_\lambda^+(x,t) = \tilde{f}_\lambda(x,t^+), \ \tilde{F}_\lambda^+(x,t) = \int_0^t\tilde{f}_\lambda^+(x,\tau)\,d\tau,\]
and for all $u\in\w$
\[\tilde\Phi_\lambda^+(u) = \frac{\|u\|^p}{p}-\int_\Omega\tilde{F}_\lambda^+(x,u)\,dx.\]
Arguing as in Lemma \ref{til} we see that $\tilde\Phi_\lambda^+\in C^1(\w)$ is coercive, satisfies $(PS)$, and whenever $u\in K(\tilde\Phi_\lambda^+)$ we have that $u\in\cs$ solves \eqref{dir} and $0\le u\le u_+$ in $\Omega$. So, there exists $\tilde{u}_+\in\w$ s.t.\
\[\tilde\Phi_\lambda^+(\tilde{u}_+) = \inf_{u\in\w}\tilde\Phi_\lambda^+(u) = \tilde{m}_+.\]
Using ${\bf H}_2$ \ref{h24} as in Lemma \ref{pos} (precisely, see \eqref{pos3}) we see that $\tilde{m}_+<0$, hence $\tilde{u}_+\neq 0$. Once again, Propositions \ref{reg}, \ref{smp} imply that $\tilde{u}_+\in{\rm int}(\cs_+)$. So, $\tilde{u}_+$ turns out to be a positive solution of \eqref{dir} s.t.\ $\tilde{u}_+\le u_+$ in $\Omega$, which by extremality implies $\tilde{u}_+=u_+$. Then, for all $u\in\w\cap{\rm int}(\cs_+)$ we have
\[\tilde\Phi_\lambda(u) = \tilde\Phi_\lambda^+(u) \ge \tilde\Phi_\lambda^+(u_+) = \tilde\Phi_\lambda(u_+),\]
in particular $u_+$ is a $\cs$-local minimizer of $\tilde\Phi_\lambda$. By Proposition \ref{svh}, $u_+$ is also a $\w$-local minimizer of $\tilde\Phi_\lambda$, as claimed.
\vskip2pt
\noindent
Similarly, we see that $u_-\in -{\rm int}(\cs_+)$ is a local minimizer of $\tilde\Phi_\lambda$.
\vskip2pt
\noindent
Recalling that $K(\tilde\Phi_\lambda)$ is finite, by a topological version of the mountain pass theorem (see \cite[Theorem 6.99, Proposition 6.100]{MMP}) we deduce the existence of $\tilde{u}\in K(\tilde\Phi_\lambda)$ s.t.\ $\tilde{u}\neq u_\pm$ and
\beq\label{nod1}
C_1(\tilde\Phi_\lambda,\tilde{u}) \neq 0.
\eeq
Comparing \eqref{nod1} with Lemma \ref{cgz}, we see that $\tilde{u}\neq 0$. Besides, by Lemma \ref{til} $\tilde{u}\in\cs\setminus\{0\}$ solves \eqref{dir} and $u_-\le\tilde{u}\le u_+$ in $\Omega$. Then, $\tilde{u}$ must change sign in $\Omega$. Indeed, assuming by contradiction that $\tilde{u}\ge 0$, then by Proposition \ref{smp} we would have $\tilde{u}\in{\rm int}(\cs_+)$ with $\tilde{u}\le u_+$ and $\tilde{u}\not\equiv u_+$, a contradiction to Lemma \ref{ext}. Similarly, if $\tilde{u}\le 0$ in $\Omega$, we reach a contradiction.
\vskip2pt
\noindent
Thus, we have proved the existence of four solutions of \eqref{dir} (beside $0$): $u_+,v_+\in{\rm int}(\cs_+)$, $u_-\in -{\rm int}(\cs_+)$, and $\tilde{u}\in\cs$ nodal.
\end{proof}

\begin{remark}\label{rno}
Again we recall some alternative assumptions to ${\bf H}_2$, under which existence of a nodal solution can be achieved. For instance, arguing as in \cite{MMP1} one could require a linear behavior of $g(x,t)$ as $t\to 0^-$, together with a global sign condition. As in \cite{IMP}, one could assume a quasi-critical growth with a quasi-monotonicity condition on $f_\lambda(x,\cdot)$ (see Remark \ref{rcs}). Finally, as in \cite{FI}, one can assume a different condition of the type
\[\liminf_{t\to 0}\frac{f_\lambda(x,t)}{|t|^{p-2}t} \ge \lambda_2\]
uniformly for a.e.\ $x\in\Omega$, where $\lambda_2>\lambda_1$ denotes the second variational eigenvalue of $\fpl$ in $\w$ (this argument is based on a variational characterization of $\lambda_2$ proved in \cite{BP}).
\end{remark}

\vskip4pt
\noindent
{\bf Acknowledgement.} Both authors are members of GNAMPA (Gruppo Nazionale per l'Analisi Matematica, la Probabilit\`a e le loro Applicazioni) of INdAM (Istituto Nazionale di Alta Matematica 'Francesco Severi') and are supported by the grant PRIN n.\ 2017AYM8XW: {\em Non-linear Differential Problems via Variational, Topological and Set-valued Methods}. A.\ Iannizzotto is also supported by the research project {\em Evolutive and stationary Partial Differential Equations with a Focus on Biomathematics} (Fondazione di Sardegna 2019). We wish to thank D.\ Mugnai for a stimulating discussion on the Brezis-Oswald result. Finally, we thank the anonymous Referee for the useful suggestions.
\vskip4pt

\end{document}